\newtheorem{df}{Definition}
\newtheorem{thm}{Theorem}
\newtheorem{lem}{Lemma}
\newtheorem{prop}{Proposition}
\title{Hopf bifurcation in a conceptual climate model with ice-albedo and precipitation-temperature feedbacks}
\author{\L ukasz P\l ociniczak\thanks{Faculty of Pure and Applied Mathematics, Wroc{\l}aw University of Science and Technology, Wyb. Wyspia{\'n}skiego 27, 50-370 Wroc{\l}aw, Poland}$\;^,$\footnote{Email: lukasz.plociniczak@pwr.edu.pl}}
\date{}
\begin{document}
\maketitle

\begin{abstract}
	
In this paper we analyse a dynamical system based on the so-called KCG (K\"all\'en, Crafoord, Ghil) conceptual climate model. This model describes an evolution of the globally averaged temperature and the average extent of the ice sheets. In the nondimensional form the equations can be simplified facilitating the subsequent analysis. We consider the limiting case of a stationary snow line for which the phase plane can be completely analysed and the type of each critical point can be determined. One of them can exhibit the Hopf bifurcation for which existence we find sufficient conditions. Those, in turn, have a straightforward physical meaning and indicate that the model predicts internal oscillations of the climate. Using the typical real-world values of appearing parameters we conclude that the obtained results are in the same ballpark as the conditions on our planet during the quaternary ice ages. Our analysis is a rigorous justification of a generalization of some previous results by KCG and other authors. \\

\noindent\textbf{Keywords}: climate dynamics, conceptual model, internal oscillations, Hopf bifurcation, dynamical system
\end{abstract}

\section{Introduction}
The story of our planet is a fascinating one having many spectacular moments and interesting phenomena. Focusing just on the climate, the data shows that the Earth has undergone a series of prolonged periods of low temperature during which the ice sheets descended to somewhat low latitudes - the ice ages. Apart some very extreme events - such as the hypothetical Snowball Earth (see for ex. \cite{Pie11,Hyd00}) - the last series of glaciations during the Quaternary period showed a quasi-periodic behaviour. These oscillations were characterized by a slow build-up of ice sheets and then a rapid deglaciation (see Fig. \ref{fig:LisieckiRaymo}). This approximately regular recent behaviour of the climate is peculiar and they are several theories which try to explain it. The most popular one, founded by the research of Milutin Milankovitch \cite{Mil98} and later firmly established by the experimental findings of Hays, Imbrie, and Shackleton \cite{Hay76}, says that the astronomical variations in the Earth's orbit (axial tilt, eccentricity and precession) act as a pacemaker of the global temperature. 

\begin{figure}
	\centering
	\includegraphics[scale=1.4]{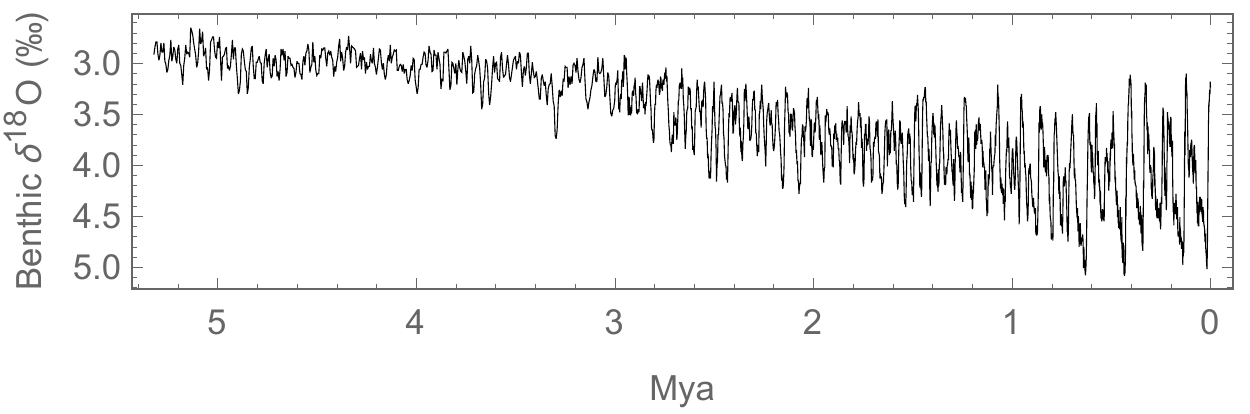}
	\includegraphics[scale=1.4]{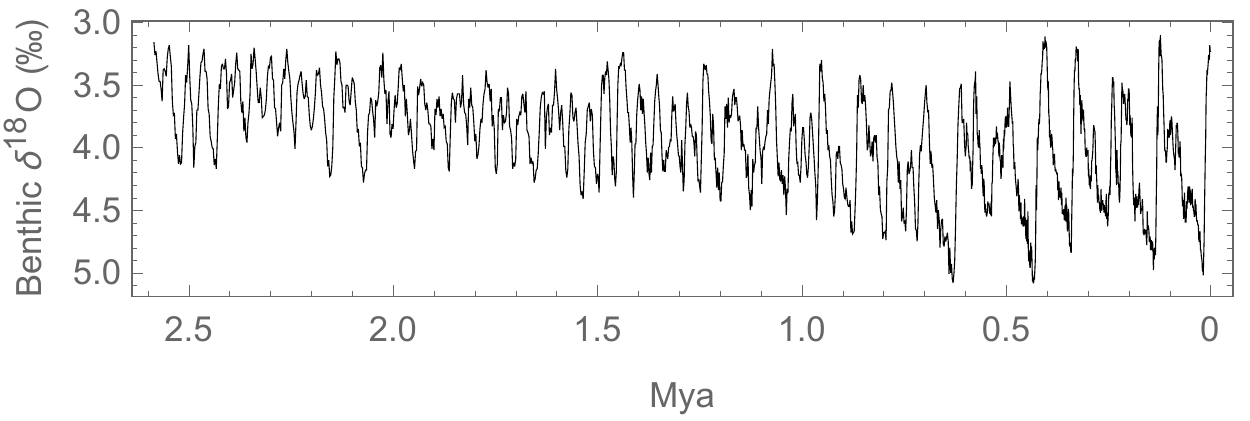}
	\caption{Time series of the benthic $\delta^{18}$O (a ratio $^18$O/$^16$O) as a proxy for temperature variations. During the warm periods the heavier isotope of the oxygen $^{18}$O evaporates from the oceans more readily and then is transported to higher latitudes where falls as precipitation. Therefore, smaller $\delta^{18}$O indicates higher temperature. The top plot shows the last 5.3 million years while the bottom plot is a magnification of the Pleistocene epoch (last 2.588 million years). The vertical axis is reversed so that the temperature increases upwards. Moreover, the time flows rightwards and the present is marked by 0. Data is taken from \cite{Lis05}.}
	\label{fig:LisieckiRaymo}
\end{figure}

Milankovitch theory is very attractive since the main frequencies of the astronomical forcing closely correspond to those observed in the geological data \cite{McG12,Ber88}. There are, however, certain problems with that simple explanation. First, the dominant period of the glacial oscillations during the last million years is about 100ka which closely corresponds to the period of orbital eccentricity changes. On the other hand, its can be easily calculated that the amplitude of that astronomical forcing is too small to drive the climate to the observed extent. Moreover, during the times before 1 Mya the dominant period of the temperature was approximately equal to 40ka. This peculiar bifurcation of the oscillation frequency cannot be ascribed to the Milankovitch theory (see \cite{Cla06}). As was suggested for example in \cite{Le83,Tzi06,Fow13,De13}, the linear astronomical forcing can be treated as a tuning (or pacemaker) mechanism for a nonlinear \emph{internal} oscillations of the climate. Some of the important ingredients of this mechanism are various positive and negative feedbacks from the ice sheets, precipitation, concentration of CO$_2$ and deep ocean circulation \cite{Hog08,Sal84,Pai01,Fow13}. Moreover, the aforementioned change of oscillation period, the so-called Mid-Pleistocene Transition, can be explained by a particular bifurcation \cite{Qui17,Sal84,Ash15}. 

The various climate feedbacks can be analysed by the use of conceptual climate models which differ from the complex Global Climate Models (GCMs) by focusing only on some specific averaged properties of the Earth system \cite{Cru12,Cla02}. These models are usually composed of several differential equations and can help to isolate the particular ingredients of the whole climate. To mention only a few examples, Saltzman and Maasch proposed and developed a physically based model describing glacial oscillations as a tectonically and astronomically forced limit cycle \cite{Sal84,Maa90}. The main variables of the model are total ice mass, CO$_2$ concentration, and the North Atlantic deep water. Their theory has later been summarized in a book \cite{Sal02}. The mathematical treatment of this model has been recently conducted in \cite{Eng17}. Furthermore, a more elaborated box model of the ocean has been constructed in \cite{Gil00}. Recently, an interesting dynamical model exhibiting a subcritical Hopf bifurcation has been introduced in \cite{Pai04} modelling the ice volume, Antarctic ice sheet extent and the atmospheric concentration of CO$_2$. Moreover, in \cite{Cru11} the so-called minimal model for the ice-ages has been introduced as a forced van der Pol oscillator. Interestingly, some conceptual climate models exhibit mixed mode oscillations and this case has been rigorously investigated in \cite{Rob15}. 

Many approaches to modelling climate dynamics start from formulating the net energy balance averaged over the planet. These so-called Energy Balance Models (EBMs) have been initiated by seminal papers of Budyko \cite{Bud69} and Sellers \cite{Sel69} and nowadays are known by their name. Their various generalizations applied to many problems in climate dynamics have been investigated in a series of papers by North and his collaborators (for a summary see \cite{Nor75,Nor81} and also \cite{Nor83}). There is also a recent and interesting rigorous study of the dynamics generated by Budyko-Sellers model supplemented by varying the ice line (extent of the ice sheet). Specific results can be found for example in \cite{McG12,McG14,Wid13,Wal16}. Moreover, in\cite{Fow13,Fow15} the ice energy balance notion has been supplemented by additional physical equations for the ice sheet movement and carbon cycle in an impressive dynamical system modelling these complex interactions. Finally, we mention the paper by K\"all\'en, Crafoord, Ghil \cite{Kal97} on which we base our reasoning. In that work the energy balance is coupled with ice sheet movement and a physically plausible limit cycle is numerically found. Further results on that model can be found in \cite{Ghi81,Ghi83}.

In what follows we generalize the KCG model by introducing arbitrary functional forms of the ice-albedo and precipitation-temperature feedbacks. In reality, those functions cannot be determined explicitly by any measurement. We show that assuming only the physically obvious monotonicity and boundedness we are able to prove the results observed by previous Authors for the specific choices of aforementioned functions. We then consider a simplified version of the snow line model and rigorously justify that certain simplifications can be conducted in the nondimensional form of the system. Furthermore, we classify the type of each stationary point that can arise in this general situation. In particular, under conditions which we precisely state, one of them can exhibit a Hopf bifurcation yielding a limit cycle with an explicitly known first Lypaunov coefficient. In Section 2 we derive the model and rescale it to the nondimensional form. Section 3 contains the results of our analysis. 

\section{Model derivation}
\subsection{Energy balance}
We begin with a derivation of the governing equations which are statements of the energy and mass conservation. The initial step in the model formulation is a Budyko-Sellers type of energy conservation. In our work we assume the so-called zero-dimensional approximation meaning that we are concerned with globally averaged energy balance over sufficiently long time periods. Schematically, the energy equation has the form
\begin{equation}
	c \frac{dT}{dt} = q_i - q_o,
\label{eqn:EnergyConservation}
\end{equation}
where $T$ is the globally averaged temperature, $c$ is the atmosphere heat capacity while $q_i$ and $q_o$ are, respectively, incoming shortwave and outgoing longwave radiative thermal fluxes. One can directly calculate that $c$ defined in the above formula is approximately equal to $m_a c_a / A_E$, with $m_a$ being the mass of the atmosphere, $c_a$ specific heat of air and $A_E$ Earth's area (see \cite{Fow13}).

Earth receives the energy in a high quality (mostly) shortwave solar radiation. Let $Q$ denote the solar constant, i.e. the mean amount of Sun's irradiance per unit area of a plane perpendicular to solar rays. Assuming parallel ray approximation, Earth receives the amount of energy per unit time equal to $\pi R_E^2 Q $ with $R_E$ being the radius of Earth. Since $4\pi R_E^2$ is the total Earth's surface area we have
\begin{equation}
	q_i = \frac{1}{4}(1-\alpha)Q,
\label{eqn:IncomingFlux}
\end{equation}
where $\alpha$ is the mean terrestrial albedo (the fraction of reflected to absorbed radiation). It has to be noted that by no means should $Q$ and $\alpha$ be regarded as spatially independent. For example, the time-average solar irradiance (so-called insolation) has a profound meridional distribution. This distribution can be calculated directly from the spherical geometry and has been done for example in \cite{Nad17}. One should point, however, that the horizontal distribution of heat by turbulent motion of the atmosphere takes place at a much faster time-scale than the one that interests us in our conceptual model (see \cite{Fow11}). Hence, it is not unreasonable to assume that the whole planet has a well-defined average uniform temperature. 

The aforementioned albedo $\alpha$ is a function of the underlying type of matter, for example ice reflects more light than a forest. In the present, when we globally average the albedo we obtain a value close to $0.3$. Our foregoing considerations will be based on separating the continental $\alpha_c$ and oceanic $\alpha_o$ parts of the albedo
\begin{equation}
	\alpha(T,l) = \gamma \alpha_c(l) + (1-\gamma) \alpha_o(T),
\end{equation}
where $l$ is the latitudinal extent of the ice sheets and $\gamma$ is a fraction of the area of continents to the whole surface of Earth. Of course $0\leq \alpha \leq 1$. This model has been proposed in \cite{Kal97}. If more ice covers the land, the albedo should become larger. Therefore, $\alpha_c=\alpha_c(l)$ will be later chosen to be an increasing function. On the other hand, the albedo of the ocean is usually taken to account for two extremes: the lack and the presence of sea ice. The formation of the sea ice is, in turn, strongly associated with the temperature: the colder the climate, the more sea ice is created increasing the oceanic albedo. For example, in \cite{Sel69,Kal97} the albedo of the ocean has been taken to be a piecewise linear function, while in \cite{Fow11} a hyperbolic tangent smoothing approximation has been used. As we will show below, the exact form of the oceanic albedo is not relevant for the quantitative dynamics of the climate and hence we will use an arbitrary sigmoid function.

The outgoing longwave radiation (OLR) is Earth's main mean of energy loss. Because the atmosphere contains many absorbers sensitive to the long electromagnetic waves (hence the Greenhouse effect), the precise quantitative description of the OLR is a complex task (see for ex. \cite{Pie10}). Staying at the conceptual level we will adopt a semi-empirical approach in determining $q_o$. Since the magnitude of OLR closely agrees with incoming solar radiation, the planet is in a energetic equilibrium. We could thus use the Stefan-Boltzmann's law that states that the outgoing flux is equal to $\sigma e^{-\Gamma} T^4$. Here, $\sigma$ is the Stefan-Boltzmann constant while $\Gamma$ is an empirical parameter modelling the radiation absorption in the atmosphere - the so-called greenhouse parameter. This approach had been taken in a number of works, for example in \cite{Fow11, Fow13}. We will, however, model the outgoing flux in a supposedly simpler way which dates back to Budyko's original work \cite{Bud69}. Mostly due to the short interval of relevant temperatures, say $250-320$K, the sensible choice is to propose the following linear dependence 
\begin{equation}
	q_o = A + B T,
\label{eqn:OutgoingFlux}
\end{equation}
where $A$ and $B$ are constants determined from the data (to be given below in Tab. \ref{tab:Parameters}). It has also been previously argued that $A$ can be related to the concentration of CO$_2$ in the atmosphere (see \cite{Wal13}). Finally, putting (\ref{eqn:IncomingFlux}) and (\ref{eqn:OutgoingFlux}) in (\ref{eqn:EnergyConservation}) we obtain the equation of energy conservation 
\begin{equation}
	c \frac{dT}{dt} = \frac{Q}{4}\left(1-\gamma \alpha_c(l) - (1-\gamma)\alpha_o(T)\right)-A-B T,
\label{eqn:EnergyConservationFull}
\end{equation}
where $\alpha_o$ is of the form (\ref{eqn:alphao}). Taking the $l$ to be constant one can immediately see that the above equation enjoys multiple steady states which forms the basis of many fascinating phenomena associated with climate dynamics (see \cite{Pai98,Dit18}). A careful examination of the corresponding bifurcation diagram shows hysteretic behaviour thanks to which the so-called tipping points of abrupt climate change are present \cite{Len11,Fow11}.

\subsection{Mass balance}
By the mass balance we understand the growth and retreat of ice sheets under the influence of the climate variation. We will derive an equation describing the average behaviour of those great masses of ice. Our reasoning is based on the ice sheet model proposed in \cite{Wee76} (but see also \cite{Oer80,Kal97}). This is a simplified model based on the plastic flow and isostasy. The more accurate and elaborated models based on Glen's law are surveyed in \cite{Van13,Oer84,Fow97}. 

\begin{figure}
	\centering
	\includegraphics[scale=1]{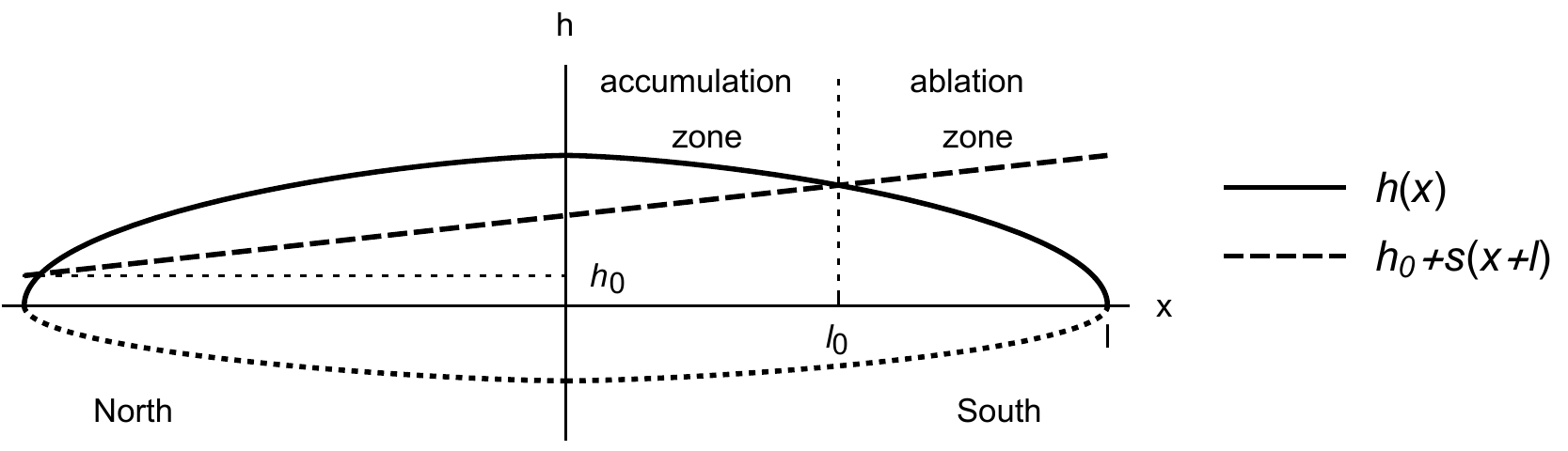}
	\caption{Schematic view of an ice sheet. Adapted from \cite{Wee76}.}
	\label{fig:IceSheet}
\end{figure}

The overall picture of the ice sheet is presented on Fig. \ref{fig:IceSheet}. The ice sheet is assumed to be zonally symmetric, i.e. have an essentially two-dimensional latitudinal profile. Moreover, we orient the $x$-axis pointing southward with origin at the ice sheet's half-width $l$. The northward rim $x=-l$ is located at the Artic Ocean where the ice shelves form. Assume that the mass of ice of height $h(x)$ above the sea level rests on the isostatically depressed bedrock. Following \cite{Wee76} we can assume that the rock density is three times the density of ice. Then, the buoyancy law states that the total ice thickness is equal to $\frac{3}{2}h(x)$. 

The ice sheet moves according to the plastic flow law, i.e. the whole mass behaves as a plastic fluid experiencing the yield stress $\tau_0$. Since the normal stresses are hydrostatic, in the equilibrium we must have
\begin{equation}
	\tau_0 = \frac{3}{2}\rho_i g h \left|\frac{dh}{dx}\right|,
\end{equation}
where $\rho_i$ is the density of ice. This equation can be immediately integrated giving the parabolic profile
\begin{equation}
	h(x) = \sqrt{\frac{4\tau_0 l}{3\rho_i g}\left(1-\frac{|x|}{l}\right)}=H\sqrt{l}\sqrt{1-\frac{|x|}{l}}, \quad H:=\sqrt{\frac{4\tau_0}{3\rho_i g}}.
\label{eqn:SheetProfile}
\end{equation}
The typical value of the yield stress and the corresponding height scale is given in Tab. \ref{tab:Parameters}.

The dynamics of the ice sheet is governed by an interplay of snow accumulation and melting. We assume that the northern part of the ice sheet is in equilibrium with its southern counterpart. Therefore, the mass balance of the whole sheet is governed by the ablation and accumulation in the region $[0,l]$. The amount of snowfall is closely associated with the temperature via the so-called snow line. The temperature falls with the height and thus there is a well-defined $0^\circ C$ isotherm under which the snow, if fallen, will melt. In the cited works this isotherm has been taken to be linear, i.e.
\begin{equation}
	h_{iso}(x) = h_0 + s(x+l),
\label{eqn:Snowline}
\end{equation}	
where $h_0$ is the height at the Arctic ocean while $s$ is the slope parameter. The northern height of the isotherm depends generally on the temperature or astronomical forcing. This has been taken into account in the literature. For example in \cite{Wee76,Fow13} Authors correlated the variation in $h_0$ with the Milankovitch oscillations while in \cite{Kal97} a linear dependence on the global temperature has been prescribed.

The snowline position $x=l_0$ defining the part of the sheet which is nourished by the snowfall is defined as a solution of $h(x) = h_{iso}(x)$. The mass balance can then be written as
\begin{equation}
	\frac{d}{dt}\int_0^l \frac{3}{2}h(x) dx = a l_0 - m (l-l_0),
\label{eqn:MassBalance1}
\end{equation}
where $a$ and $m$ are respectively accumulation and melting rates. Now, if we assume that the ice sheet can grow or retreat, that is $l=l(t)$, we can obtain
\begin{equation}
	\frac{d}{dt}\int_0^l \frac{3}{2}h(x) dx = \frac{3}{2}H\frac{d}{dt}\int_0^l \sqrt{l}\sqrt{1-\frac{x}{l}} dx = \frac{3}{2}H \sqrt{l} \; \frac{dl}{dt}.
\end{equation}
Furthermore, the value of $l_0$ can be found by equating (\ref{eqn:SheetProfile}) and (\ref{eqn:Snowline})
\begin{equation}
	h_0 + s(l_0+l) = H \sqrt{l}\sqrt{1-\frac{l_0}{l}},
\label{eqn:SnowLine}
\end{equation}
which is a quadratic equation with the meaningful solution
\begin{equation}
\label{eqn:l0}
	l_0 = \frac{H^2}{s^2}\left[-\left(\frac{h_0 s}{H^2}+\frac{s^2}{H^2}l+\frac{1}{2}\right)+\sqrt{\frac{h_0 s}{H^2}+2 \frac{s^2}{H^2}l+\frac{1}{4}}\right],
\end{equation}
which together with (\ref{eqn:MassBalance1}) gives the second dynamic equation
\begin{equation}
	\frac{3}{2}H \sqrt{l}\frac{dl}{dt} = (a+m) \frac{H^2}{s^2}\left[-\left(\frac{h_0 s}{H^2}+\frac{s^2}{H^2}l+\frac{1}{2}\right)+\sqrt{\frac{h_0 s}{H^2}+2 \frac{s^2}{H^2}l+\frac{1}{4}}\right] - ml \quad \text{for } l_0 \geq 0.
\label{eqn:MassConservationFull}
\end{equation}
This is a rather complicated nonlinear equation but we will shortly see that an appropriate scaling and approximation will yield its accurate simplification. Also notice that nowhere in the above discussion we have mentioned the response of the bedrock to the evolution of the ice sheet. The moving mass will produce a delayed feedback from the lithosphere and hence, might provide an essential ingredient of the dynamics. This requires adding an additional equation to our system and a simplified version of it have been introduced in \cite{Ghi81,Le83}. In this paper, however, we will consider only the instantaneous adjustment of the bedrock leaving the more general problem for the future work.

Two additions are necessary for the well-posedness of the model. First, it may happen that the whole southern side of the ice sheet is in the ablation zone. This can happen for sufficiently large sheets or by raising the $0^\circ$C isotherm. In this case the whole mass of ice becomes stagnant and our derivation is not valid (equation (\ref{eqn:SnowLine}) does not have a positive solution). Authors of \cite{Wee76} and \cite{Fow13} propose to assume that we can take $l_0 = 0$ and then, the mass balance can be written as
\begin{equation}
	\frac{3}{2}H \sqrt{l} \; \frac{dl}{dt} = - m l \quad \text{for} \quad l_0 < 0.
\label{eqn:MassBalancel0Negative}
\end{equation}
Another situation of (\ref{eqn:SnowLine}) failing to have a solution is when the negative snowline elevation $h_0$ becomes smaller than $-sL$. Then, the snow will accumulate on the ground and if $l=0$ the ice sheet will nucleate. For a sufficiently high snowfall rates $a$ (or sufficiently small $-h_0/s$) the reasonable approximation is to assume (see \cite{Wee76}) that $l = -\frac{h_0}{2s}$ and 
\begin{equation}
	\frac{3}{2}H \sqrt{l} \; \frac{dl}{dt} = - \frac{ah_0}{2s} \quad \text{for} \quad l<-\frac{h_0}{2s},
\label{eqn:MassBalanceh0Negative}
\end{equation}
while for $l > -h_0/s$ the main equation (\ref{eqn:MassConservationFull}) still holds.

\subsection{Nondimensionalization}
We are finally in position to nondimensionalize our model, i.e. equations (\ref{eqn:EnergyConservationFull}) and (\ref{eqn:MassConservationFull}). The summary of all of the physical quantities used in this model is given in Tab. \ref{tab:Parameters}, while a reference to all symbols appearing in the text is collected in Tab. \ref{tab:Symbols}.

\begin{table}
\centering
\begin{tabular}{ccc}
	\toprule
	Symbol & Meaning & Typical value \\
	\midrule
	$Q$ & Solar constant & 1361 W m$^{-2}$ \\
	$\gamma$ & Continent to ocean area ratio & $0.3$ \\
	$A$ & Budyko constant in OLR flux & -267.96 W m$^{-2}$\\
	$B$ & Budyko constant in OLR flux & 1.74 W m$^{-2}$ K$^{-1}$\\
	$\tau_0$ & Ice sheet yield stress & 0.3$\times 10^5$ Pa \\
	$\rho_i$ & Ice density & 0.92$\times 10^3$ kg m$^{-3}$ \\
	$H$ & Ice sheet height scale & 2.1 m$^\frac{1}{2}$ \\
	$s$ & $0^\circ$C Isotherm slope & 0.4 $\times 10^{-3}$\\
	$h_0$ & $0^\circ$C Isotherm height over the Arctic Ocean & 1.2 km\\
	$\epsilon$ & $0^\circ$C Isotherm height over the Arctic Ocean (nondimensional) & 0.1\\
	$T^*$ & Temperature time scale & 195.55 K \\
	$L^*$ & Ice sheet extent scale & 2.7$\times 10^4$km\\
	$t^*$ & Time scale & 33.2$\times 10^3$ years\\
	$\mu$ & Ratio of ocean and ice sheet specific heats & bifurcation parameter \\
	$\beta$ & Nondimensional parameter in Budyko-Sellers model & 0.79\\
	$\alpha_1$, $\alpha_2$ & Parameters of the continental albedo & 0.25 and 4, respectively \\
	$\alpha_-$, $\alpha_+$ & Limits of oceanic albedo & 0.85 and 0.25, respectively\\
	$\xi_-$, $\xi_+$ & Limits of the ratio of accumulation and ablation & 0.1 and 0.5, respectively\\
	$\theta_\alpha$, $\Delta \alpha$ & Translation and steepness parameters for oceanic albedo & 1.4 and 0.015, respectively \\
	$\theta_\xi$, $\Delta \xi$ & Translation and steepness parameters for $\xi$ & 1.43 and 0.0027, respectively \\
	\bottomrule
\end{tabular}
\caption{Typical values of all of the physical parameters used in the manuscript. The data is based on \cite{Kal97,Fow13}.}
\label{tab:Parameters}
\end{table}

\begin{table}
	\centering
	\begin{tabular}{ccc}
		\toprule
		Symbol & Meaning & Definition \\
		\midrule
		$T$, $\theta$ & Temperature (dimensional and nondimensional) & (\ref{eqn:EnergyConservation}), (\ref{eqn:Scales})\\
		$l$, $\lambda$ & Ice sheet extent (dimensional and nondimensional) & (\ref{eqn:MassBalance1}), (\ref{eqn:Scales}) \\
		$t$, $\tau$ & Time (dimensional and nondimensional) & (\ref{eqn:Scales})\\
		$q_{i,o}$ & Incoming and outgoing fluxes & (\ref{eqn:EnergyConservation})\\
		$h$ & Ice sheet profile & (\ref{eqn:SheetProfile}) \\
		$h_{iso}$ & $0^\circ$C isotherm & (\ref{eqn:Snowline})\\
		$l_0$ & Boundary between the accumulation and ablation zones & (\ref{eqn:l0}) \\
		$\lambda_0$ & Position of $l_0$ measured in terms of $\lambda$ & (\ref{eqn:Lambda0})\\
		$\sigma$ & Generic sigmoid function & Def. \ref{def:Sigmoid}\\
		$\alpha_{c,o}$ & Continental and oceanic albedo & (\ref{eqn:alphao}), (\ref{eqn:alphac}) \\
		$\xi$ & Ratio of accumulation to ablation rate & (\ref{eqn:MassConservationScaled})\\
		$F$, $G$ & Coordinates of the vector field & (\ref{eqn:MainSystem})\\
		$f$, $g$ & $\theta$- and $\lambda$-nullclines & (\ref{eqn:Nullclines}) \\ 
		\bottomrule
	\end{tabular}
	\caption{Symbols appearing in the paper along with their meaning and location of definition.}
	\label{tab:Symbols}
\end{table}

First, we introduce the following scaling
\begin{equation}
\label{eqn:Scales}
	T (t) = T^* \theta(\tau), \quad l(t) = L^* \lambda(\tau), \quad t = t^* \tau,
\end{equation}
where starred letters denote the appropriate scales while those in Greek correspond to the new dimensionless variables. The choice for $T^*$ is straightforward. Since, as we mentioned before, the Earth is almost in thermal equilibrium where incoming solar radiation is balanced with OLR we can take
\begin{equation}
	T^* = \frac{Q}{4 B},
\label{eqn:TempScale}
\end{equation}
which, using our typical data from Tab. \ref{tab:Parameters} is $T^* = 195.55$K. Then, the energy equation becomes
\begin{equation}
	\frac{c}{B t^*} \frac{d\theta}{d\tau} = 1+\beta-\gamma\alpha_c(\lambda)-(1-\gamma)\alpha_o(\theta) - \theta, \quad \beta:=-\frac{4A}{Q} > 0,
\end{equation}
where we have retained the same letters to denote (already dimensionless) albedo functions. Next, it is convenient to introduce the dimensionless snow line position in the following way
\begin{equation}
\label{eqn:Lambda00}
	l_0(t) = L^* \lambda(\tau) \lambda_0(\tau).
\end{equation}
In this way $\lambda_0$ measures the position of the boundary of the ice sheet's accumulation level in terms of the total ice extent $\lambda$. We would also like to choose the ice sheet length scale $L^*$ according to the $0^\circ$C isotherm. A quick look at (\ref{eqn:SnowLine}) lets us choose
\begin{equation}
	L^* = \frac{H^2}{s^2}, \quad \epsilon = \frac{s h_0}{H^2}.
\label{eqn:LengthScale}
\end{equation}
In this way we have
\begin{equation}
	\lambda_0(\lambda) = \frac{1}{\lambda} \left[-\left(\epsilon+\lambda+\frac{1}{2}\right)+\sqrt{\epsilon+2\lambda+\frac{1}{4}}\right],
\label{eqn:Lambda0}
\end{equation}
which implies the nondimensional form of the mass balance equation
\begin{equation}
	\frac{d\lambda}{d\tau} = \sqrt{\lambda}\left(\left(1+\xi\right)\lambda_0-1\right) \quad \text{for} \quad \lambda_0 \geq 0.
\label{eqn:MassConservationScaled}
\end{equation}
where we have denoted the ratio of accumulation to ablation by $\xi$ and chosen the ice sheet time scale
\begin{equation}
	t^* = \frac{3}{2} \frac{H^2}{m s}.
\label{eqn:TimeScale}
\end{equation} 
Using the typical parameters from Tab. \ref{tab:Parameters} we can estimate that $t^* = 33.2\times 10^3$ years, which roughly corresponds to the usual glacial oscillation time scale. Moreover, $L^*=2.7\times 10^4$ km and $\epsilon = 0.1$. Notice that $\epsilon$ can be thought as a small parameter. Having done all the scalings we can now formulate the final version of the temperature equation
\begin{equation}
	\frac{1}{\mu}\frac{d\theta}{d\tau} =  1+\beta-\gamma\alpha_c(\lambda)-(1-\gamma)\alpha_o(\theta) - \theta, 
\end{equation}
where
\begin{equation}
	\mu := \frac{3}{2} \frac{B H^2}{m s c},
\end{equation}
is the free parameter measuring the ratio of ocean and ice sheet specific heats. We will also take this as our bifurcation parameter (what has also been done in \cite{Kal97}). This is because it is relatively easy to show that its variation produces a limit cycle via the Hopf bifurcation. And these in turn are of fundamental interest in climate science.

As for the modifications which take into accout $l_0 < 0 $ and $h_0 < 0$ the nondimensionalization is straightforward. First, equation (\ref{eqn:MassBalancel0Negative}) translates into
\begin{equation}
	\frac{d\lambda}{d\tau} = -\sqrt{\lambda} \quad \text{for} \quad \lambda_0 < 0,
\end{equation}
while (\ref{eqn:MassBalanceh0Negative}) becomes
\begin{equation}
	\frac{d\lambda}{d\tau} = -\frac{\xi}{2\sqrt{\lambda}} \epsilon \quad \text{for} \quad \lambda < -\frac{\epsilon}{2}.
\end{equation} 
If $\lambda \geq -\frac{\epsilon}{2}$ the main equation (\ref{eqn:MassConservationScaled}) is valid.

To close the model we have to prescribe the albedos $\alpha_{c,o}$ and the melting to snowfall ratio $\xi$. The continental albedo should increase with the ice sheet extent. The simplest model of that is, of course, a linear dependence
\begin{equation}
\alpha_c(\lambda) = \alpha_1 + \alpha_2 \lambda,
\label{eqn:alphac}
\end{equation}
where $\alpha_i$ are empirical constants. We have to keep in mind that in order of the above formula to produce a bounded albedo, the ice sheet extent has to be kept appropriately limited. The value $\alpha_1$ is that of a clean ground while $\alpha_2$ is determined from the continent's complete ice cover. If the ice is maximally developed, the continental albedo should become almost equal to that of the pure ice. Hence, we have the estimate
\begin{equation}
	\alpha_2 \geq \alpha_{cmax}-\alpha_1,
\end{equation}
where we have assumed that the maximal ice sheet extent is attained for $\lambda=1$. Thus, when we use the typical values from Tab. \ref{tab:Parameters} we obtain that $\alpha_1 \geq 4$. 

The oceanic albedo is a decreasing function of the temperature and, as we mentioned before, many authors proposed different versions of it: from stepwise functions to its continuous approximations. As we will show, the dynamics does not depend on the particular form of the albedo. 
\begin{df}
\label{def:Sigmoid}
	A function $\sigma:\mathbb{R}\rightarrow\mathbb{R}$ is called \textbf{sigmoid} if it is bounded and differentiable with a non-negative derivative. As a normalization one can take $\lim\limits_{x\rightarrow\pm\infty} \sigma(x) = \pm 1$.
\end{df}  
Therefore, it is reasonable to propose the following form of the oceanic albedo
\begin{equation}
\alpha_o(\theta) = \frac{1}{2}\left(\alpha_+ +\alpha_- + (\alpha_+-\alpha_-)\;\sigma_{\alpha}\left(\frac{\theta-\theta_\alpha}{\Delta \alpha}\right)\right), \quad \alpha_- \geq \alpha_+,
\label{eqn:alphao}
\end{equation}
where $0\leq\alpha_\pm\leq 1$ are limits of $\alpha_o(\theta)$ for $\theta\rightarrow \pm\infty$, $\theta_\alpha$ is the translation and $\Delta \alpha$ being the steepness parameter. The limiting case when $\Delta \alpha\rightarrow 0^+$ corresponds to the discontinuous jump from $\alpha_-$ to $\alpha_+$ at $\theta=\theta_\alpha$. This form has been chosen for example in \cite{McG12}.

In \cite{Kal97} Authors argue that $\xi=\xi(\theta)$ is an increasing function determined mostly by the snowfall rate. When the temperatures are low the oceans are mostly covered with ice. The evaporation is small, and hence the precipitation rate is low. Therefore, we can take
\begin{equation}
	\xi(\theta) = \frac{1}{2}\left(\xi_+ +\xi_- + (\xi_+-\xi_-)\;\sigma_{\xi}\left(\frac{\theta-\theta_\xi}{\Delta \xi}\right)\right), \quad \xi_- \leq \xi_+,
\label{eqn:xi}
\end{equation}
where $\sigma_\xi$ is an arbitrary sigmoid function and $0\leq\xi_\pm\leq 1$ are respective limits of $\xi(\theta)$ when $\theta\rightarrow\pm\infty$. This concludes the derivation of our general model.

\section{Analysis}
\subsection{Simplification}
So far we have stayed at the full generality and did not make any simplifications corresponding to the particular configuration of the ice sheet and the temperature. It is crucial to notice that the $\lambda$-coordinate for the most interesting stationary point of the model is generally small. First, notice that by taking the value of $h_0$ estimated in \cite{Wee76,Fow13} to be maximally of order of $1.2$km, the reasonable magnitudes of $\tau_0 = 0.3 \times 10^5$ Pa and $s = 0.4\times 10^{-3}$ we have $\epsilon \approx 0.1$ which can be thought of as a small quantity. First, we will estimate the magnitude of the stationary value of the ice sheet extent. 

Notice that only the case of $\lambda_0 \geq 0$ yields an interesting dynamics and therefore, in the following we will only focus on the analysis of (\ref{eqn:MassConservationScaled}). It is relatively easy to obtain the estimates of the magnitude of the $\lambda$-nullcline on which the possible stationary points can lie. 

\begin{prop}
	Assume that
	\begin{equation}
		-\frac{2+\xi}{2\xi}\leq \epsilon < \frac{1}{4}\frac{\xi}{2+\xi},
	\end{equation} 
	where $\xi=\xi(\theta)$. Then the $\lambda$-nullcline of (\ref{eqn:MassConservationScaled}) decomposes into two branches $\lambda_{1,2}=\lambda_{1,2}(\theta)$ for which $0\leq \lambda_1 < \lambda_2$. Moreover, the following holds for $\epsilon >0$ 
	\begin{equation}
	\begin{split}
		\left(1+\frac{1}{\xi}\right)\epsilon^2&\leq \lambda_1(\theta) \leq 4\left(1+\frac{1}{\xi}\right)\epsilon^2, \\ 2\left(1-\frac{1}{2+\xi}\right)\epsilon &\leq \frac{\xi(1+\xi)}{(2+\xi)^2}-\lambda_2(\theta) \leq 3\left(1-\frac{1}{2+\xi}\right)\epsilon,
	\end{split}
	\end{equation}
	while for $\epsilon \leq 0$ we have
	\begin{equation}
	\begin{split}
		\left(1+\frac{1}{\xi}\right)\epsilon^2+2\left(1+\frac{1}{\xi}\right)\left(1+\frac{2}{\xi}\right)\epsilon^3 &\leq \lambda_1(\theta) \leq \left(1+\frac{1}{\xi}\right)\epsilon^2,\\
		-\left(1-\frac{1}{2+\xi}\right)\epsilon &\leq \lambda_2(\theta)-\frac{\xi(1+\xi)}{(2+\xi)^2}\leq -2\left(1-\frac{1}{2+\xi}\right)\epsilon.
	\end{split}
	\end{equation}
\end{prop}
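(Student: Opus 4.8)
The plan is to locate the nontrivial $\lambda$-nullcline explicitly by solving $(1+\xi)\lambda_0(\lambda)=1$, and then to read off the stated estimates from the resulting closed form. First I would drop the factor $\sqrt\lambda$ in (\ref{eqn:MassConservationScaled}) and, using (\ref{eqn:Lambda0}), rewrite the nullcline condition with the square root isolated as $\sqrt{\epsilon+2\lambda+\tfrac14}=\tfrac{2+\xi}{1+\xi}\lambda+\epsilon+\tfrac12$. Squaring, the terms $\epsilon+\tfrac14$ cancel against the expansion of the right-hand side, leaving the quadratic
\[
\Big(\tfrac{2+\xi}{1+\xi}\Big)^2\lambda^2+\tfrac{2\epsilon(2+\xi)-\xi}{1+\xi}\,\lambda+\epsilon^2=0 .
\]
Its discriminant works out to $\tfrac{\xi\,(\xi-4\epsilon(2+\xi))}{(1+\xi)^2}$, which is positive precisely when $\epsilon<\tfrac14\tfrac{\xi}{2+\xi}$; this is the upper hypothesis and it furnishes two real roots $\lambda_1<\lambda_2$.

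The decisive step is to notice that these roots are perfect squares: a short rearrangement of the quadratic formula gives
\[
\lambda_{1,2}=\frac{1+\xi}{4(2+\xi)^2}\Big(\sqrt{\xi}\mp\sqrt{\xi-4\epsilon(2+\xi)}\Big)^2,
\]
the minus sign yielding $\lambda_1$. Both roots are thus manifestly nonnegative, and setting $\epsilon=0$ gives $\lambda_1=0$ and $\lambda_2=\tfrac{\xi(1+\xi)}{(2+\xi)^2}$, which is exactly the reference value in the statement. Writing $p:=4\epsilon(2+\xi)$ and rationalizing $\sqrt\xi-\sqrt{\xi-p}=p/(\sqrt\xi+\sqrt{\xi-p})$, I obtain $\lambda_1=4(1+\xi)\epsilon^2/(\sqrt\xi+\sqrt{\xi-p})^2$. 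For $\epsilon>0$ one has $0\le\sqrt{\xi-p}\le\sqrt\xi$, so the denominator lies in $[\xi,4\xi]$ and the bounds $(1+\tfrac1\xi)\epsilon^2\le\lambda_1\le4(1+\tfrac1\xi)\epsilon^2$ drop out at once. For $\lambda_2$ I would instead use the factorisation $4\xi-(\sqrt\xi+\sqrt{\xi-p})^2=(\sqrt\xi-\sqrt{\xi-p})(3\sqrt\xi+\sqrt{\xi-p})$; after dividing by $\sqrt\xi+\sqrt{\xi-p}$ everything reduces to bounding the ratio $(3\sqrt\xi+\sqrt{\xi-p})/(\sqrt\xi+\sqrt{\xi-p})$, which sits in $[2,3]$ for $\epsilon>0$ and in $[1,2]$ for $\epsilon\le0$, producing precisely the constants $2,3$ and $1,2$ in the two regimes.

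The only place a genuine second-order refinement is needed is the lower estimate for $\lambda_1$ when $\epsilon\le0$. Here $\sqrt{\xi-p}\ge\sqrt\xi$, so the denominator is at least $4\xi$ and the upper bound $\lambda_1\le(1+\tfrac1\xi)\epsilon^2$ is immediate; the matching lower bound with its cubic correction requires an honest upper bound on the denominator. I would set $w:=|p|/(2\xi)=-2\epsilon(2+\xi)/\xi\ge0$, use $\sqrt{1+2w}\le1+w$ to get $(\sqrt\xi+\sqrt{\xi-p})^2\le4\xi(1+\tfrac w2)^2$, and then invoke the elementary inequality $(1+\tfrac w2)^{-2}\ge1-w$ to arrive at exactly $(1+\tfrac1\xi)\epsilon^2+2(1+\tfrac1\xi)(1+\tfrac2\xi)\epsilon^3$. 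The hard part is not any single estimate but the bookkeeping: one must track how each bound on $\sqrt\xi+\sqrt{\xi-p}$ flips between an upper and a lower estimate as the sign of $\epsilon$ changes, and one must separately verify that the root of the quadratic actually solves the unsquared equation, i.e. that $\tfrac{2+\xi}{1+\xi}\lambda+\epsilon+\tfrac12\ge0$ there. Carrying out this sign analysis, together with the reality condition above, is what delimits the admissible interval for $\epsilon$ recorded in the hypothesis, the lower bound $\epsilon\ge-\tfrac{2+\xi}{2\xi}$ serving to keep both branches well defined.
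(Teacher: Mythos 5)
Your core computations are correct, and they follow a genuinely different route from the paper's. Both arguments reduce the nullcline condition for (\ref{eqn:MassConservationScaled}) to the same quadratic, but the paper works in the variable $\zeta=1/(1+\xi)$, writes the roots with an explicit square root, and then squeezes $\sqrt{1-4\tfrac{1+\zeta}{1-\zeta}\epsilon}$ between Taylor-type polynomial bounds (including an auxiliary-function argument for the inequality of the form $\sqrt{1-2x}\ge 1-x-2x^2$). Your perfect-square form $\lambda_{1,2}=\tfrac{1+\xi}{4(2+\xi)^2}\left(\sqrt\xi\mp\sqrt{\xi-p}\right)^2$ with $p=4\epsilon(2+\xi)$ is an exact rewriting of the paper's roots (I verified the two expressions agree), and the rationalization $\lambda_1=4(1+\xi)\epsilon^2/\left(\sqrt\xi+\sqrt{\xi-p}\right)^2$ together with the factorization $4\xi-\left(\sqrt\xi+\sqrt{\xi-p}\right)^2=\left(\sqrt\xi-\sqrt{\xi-p}\right)\left(3\sqrt\xi+\sqrt{\xi-p}\right)$ makes every stated constant drop out of elementary interval bounds: the denominator lies in $[\xi,4\xi]$, and the ratio $\left(3\sqrt\xi+\sqrt{\xi-p}\right)/\left(\sqrt\xi+\sqrt{\xi-p}\right)$ lies in $[2,3]$ for $\epsilon>0$ and $[1,2]$ for $\epsilon\le0$. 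Your $\epsilon\le 0$ lower bound also checks out: $\sqrt{1+2w}\le 1+w$ and $(1+w/2)^{-2}\ge 1-w$ yield exactly the cubic correction, and notably your chain is valid for \emph{all} $\epsilon\le0$, whereas in the paper the hypothesis $\epsilon\ge-\tfrac{2+\xi}{2\xi}$ is precisely the validity range of its Taylor estimate. In this respect your argument is cleaner and slightly stronger.

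Two caveats about the part you deferred. First, for $\epsilon>0$ the ``unsquared equation'' check is immediate in your own notation and should not be left as bookkeeping: at the roots the right-hand side equals $\tfrac12+\tfrac{\sqrt\xi\left(\sqrt\xi\mp\sqrt{\xi-p}\right)}{2(2+\xi)}$, which is positive whenever $0\le\sqrt{\xi-p}\le\sqrt\xi$; the paper avoids the issue differently, by counting genuine solutions geometrically via $\lambda_0(\lambda_{max})=\tfrac{1-4\epsilon}{1+4\epsilon}>\zeta$, which bypasses squaring artifacts altogether. Second, your closing claim that the lower bound $\epsilon\ge-\tfrac{2+\xi}{2\xi}$ is what ``keeps both branches well defined'' is not correct, and the sign check genuinely fails inside the hypothesized range: the exact condition for the smaller root to satisfy the unsquared equation is $\epsilon\ge-\tfrac{2+3\xi}{4\xi}$, which is strictly stronger for $\xi<2$. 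For instance $\xi=\tfrac12$, $\epsilon=-2$ is admitted by the hypothesis, yet there $\tfrac{2+\xi}{1+\xi}\lambda_1+\epsilon+\tfrac12<0$, so $\lambda_1$ is a spurious root of the quadratic and not a nullcline point; your estimates in that regime concern the quadratic root rather than a branch of the nullcline. In fairness, the published proof has the same defect --- its assertion that ``the maximum of $\lambda_0$ is $1$ for $\epsilon<0$, hence both branches exist'' ignores that this maximum is attained at the boundary $\lambda=-\epsilon/2$ --- so your proposal is no weaker than the paper's argument; but if you complete it, you should state explicitly the range of $\epsilon$ on which $\lambda_1$ is a genuine solution.
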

\begin{proof}
At the beginning assume that $\epsilon > 0$. Observe that the $\lambda$-nullcline of (\ref{eqn:MassConservationScaled}) with a help of (\ref{eqn:Lambda0}) can be determined from the equation
\begin{equation}
	\frac{1}{\lambda} \left[-\left(\epsilon+\lambda+\frac{1}{2}\right)+\sqrt{\epsilon+2\lambda+\frac{1}{4}}\right] = \zeta, \quad \zeta := \frac{1}{1+\xi}.
\end{equation}
Inspecting the left-hand side of the above we can notice that there exist exactly two solutions of the above if and only if the maximum of $\lambda_0(\lambda)$ is greater than $\zeta$. This maximum can easily be calculated to yield
\begin{equation}
	\lambda_0(\lambda_{max}) = \frac{1-4\epsilon}{1+4\epsilon} \quad \text{where} \quad \lambda_{max} = \frac{1}{2}\epsilon(1+4\epsilon).
\end{equation}
Hence, two branches exist if and only if $\lambda_0(\lambda_{max}) > \zeta$, that is to say $\epsilon < \frac{1}{4}\frac{1-\zeta}{1+\zeta}$ which is our assumption. 

Furthermore, since the equation defining $\lambda_1$ and $\lambda_2$ is essentially a quadratic, its solutions can be readily obtained in a closed, yet cluttered, form
\begin{equation}
	\lambda_{1,2} = \frac{1}{2(1+\zeta)^2}\left[1-\zeta-2(1+\zeta)\epsilon\mp\sqrt{(1-\zeta)^2-4(1-\zeta^2)\epsilon}\right],
\end{equation}
which after some manipulation can be written as
\begin{equation}
	\lambda_{1,2} = \frac{1-\zeta}{2(1+\zeta)^2}\left[1-2\frac{1+\zeta}{1-\zeta}\epsilon\mp\sqrt{1-4\frac{1+\zeta}{1-\zeta}\epsilon}\right].
\label{eqn:LambdaCrit}
\end{equation}
The next step is to estimate the square root in order to extract the leading order behaviour when $\epsilon\rightarrow 0^+$. First, from the Taylor expansion we have
\begin{equation}
	\sqrt{1-4\frac{1+\zeta}{1-\zeta}\epsilon} \leq 1-2\frac{1+\zeta}{1-\zeta}\epsilon-2\left(\frac{1+\zeta}{1-\zeta}\right)^2\epsilon^2.
\end{equation}
From this we immediately obtain
\begin{equation}
	\lambda_1 \geq \frac{1-\zeta}{2(1+\zeta)^2}\left[1-2\frac{1+\zeta}{1-\zeta}\epsilon-1+2\frac{1+\zeta}{1-\zeta}\epsilon+2\left(\frac{1+\zeta}{1-\zeta}\right)^2\epsilon^2\right] = \frac{\epsilon^2}{1-\zeta}.
\end{equation}
Similarly, 
\begin{equation}
	\lambda_2 \leq \frac{1-\zeta}{2(1+\zeta)^2}\left[1-2\frac{1+\zeta}{1-\zeta}\epsilon+1-2\frac{1+\zeta}{1-\zeta}\epsilon\right]=\frac{1-\zeta}{(1+\zeta)^2}-\frac{2}{1+\zeta}\epsilon.
\end{equation}
To find the other estimates notice that because of the convexity of the square root we have (or by some elementary manipulations)
\begin{equation}
	\sqrt{1-4\frac{1+\zeta}{1-\zeta}\epsilon} \geq 1-4\frac{1+\zeta}{1-\zeta}\epsilon.
\end{equation}
Therefore,
\begin{equation}
	\lambda_2 \geq \frac{1-\zeta}{2(1+\zeta)^2} \left[2-6\frac{1+\zeta}{1-\zeta}\epsilon\right] = \frac{1-\zeta}{(1+\zeta)^2}-\frac{3}{1+\zeta}\epsilon.
\end{equation}
Finally, by the analysis of the auxiliary function $1-x -\sqrt{1-2x}-2x^2$ we can derive the last essential inequality
\begin{equation}
	\sqrt{1-4\frac{1+\zeta}{1-\zeta}\epsilon} \geq 1-2\frac{1+\zeta}{1-\zeta}\epsilon-8\left(\frac{1+\zeta}{1-\zeta}\right)^2\epsilon^2,
\end{equation}
which gives us the last estimate for $\epsilon \geq 0$. 

The case of $\epsilon < 0$ is analogous. The maximum of $\lambda_0$ is always equal to $1$ for $\epsilon < 0$ and hence, both branches exist then and we can use the estimates obtained from the Taylor series. The first branch $\lambda_1$ can be estimated with a use of
\begin{equation}
	2\left(\frac{1+\zeta}{1-\zeta}\right)^2\epsilon^2+4\left(\frac{1+\zeta}{1-\zeta}\right)^3\epsilon^3\leq 1-2\frac{1+\zeta}{1-\zeta}\epsilon-\sqrt{1-4\frac{1+\zeta}{1-\zeta}\epsilon} \leq 2\left(\frac{1+\zeta}{1-\zeta}\right)^2\epsilon^2, \quad 
\end{equation}
for $-\frac{1}{2}\dfrac{1+\zeta}{1-\zeta}\leq \epsilon \leq 0$. The second solution $\lambda_2$ can be bounded by
\begin{equation}
	1\leq \sqrt{1-4\frac{1+\zeta}{1-\zeta}\epsilon} \leq 1-2\frac{1+\zeta}{1-\zeta}\epsilon, \quad \epsilon \leq 0.
\end{equation} 
This along with (\ref{eqn:LambdaCrit}) ends the proof.
\end{proof}

Immediately we can see that 
\begin{equation}
	\lambda_1 = O(\epsilon^2), \quad \lambda_2 = \frac{\xi(1+\xi)}{(2+\xi)^2}+O(\epsilon) \quad \text{as} \quad \epsilon\rightarrow 0,
\end{equation}
and hence the branch converges to zero very fast with $\epsilon\rightarrow 0$ while the larger is near a positive value. Henceforth, it seems that $\lambda_2$ is the more essential branch of the mass balance equation on which a possible critical point can lie. Moreover, because $0\leq \xi\leq 1$ we have
\begin{equation}
	\frac{\xi(1+\xi)}{(2+\xi)^2} \leq \frac{\xi_+(1+\xi_+)}{(2+\xi_+)^2} \leq \frac{2}{9},
\end{equation}
since the left-hand side function is increasing. Therefore, any critical point, if exists, has $\lambda$ always close or smaller than $\frac{2}{9}$ (for values from Tab. \ref{tab:Parameters} we have $\xi_+=0.5$ and thus the upper bound is $0.12$). Because of that we can also consider it to be a small quantity. 

Now we are in position to make the final simplification of our governing equations. We want to utilize the fact that both $\epsilon$ and $\lambda$ are small quantities. In order to simplify the expression for $\lambda_0$ we write
\begin{equation}
	\lambda_0(\lambda) = \frac{1}{\lambda} \left[\lambda-\left(\epsilon+2\lambda+\frac{1}{2}\right)+\sqrt{\epsilon+2\lambda+\frac{1}{4}}\right].
\end{equation}
Next, since $\epsilon+2\lambda$ is small we can expand the above square root into Taylor series to obtain
\begin{equation}
	\lambda_0(\lambda) = \frac{1}{\lambda} \left[\lambda-(\epsilon+2\lambda)^2+2(\epsilon+2\lambda)^3+...\right]\approx 1-4\epsilon-4\lambda-\frac{\epsilon^2}{\lambda},
\end{equation}
where we have retained only two first terms of the above expansion. We can use this as an approximation of (\ref{eqn:MassConservationScaled}) but we will make yet another simplification by taking $\epsilon\rightarrow 0$. This is motivated by the above observation concerning the $\lambda$-nullcline branches $\lambda_{1,2}$. Finally, we arrive at our climate model which we repeat here for clarity. 
\begin{equation}
\left\{
\begin{array}{l}
	\dfrac{d\theta}{d\tau} =  \mu\left[1+\beta-\gamma\left(\alpha_1+\alpha_2 \lambda\right)-(1-\gamma)\alpha_o(\theta) - \theta\right]=:F(\theta,\lambda), \vspace{4pt} \\
	\dfrac{d\lambda}{d\tau} = \sqrt{\lambda}\left(\left(1+\xi(\theta)\right)\left(1-4\lambda\right)-1\right)=:G(\theta,\lambda),
\end{array}
\right.
\quad \text{for} \quad \theta> 0, \quad 0 < \lambda \leq \frac{1}{4},
\label{eqn:MainSystem}
\end{equation}
where $\alpha_o(\theta)$, $\xi(\theta)$ are respectively given by (\ref{eqn:alphao}) and (\ref{eqn:xi}) while all parameters $\mu$, $\beta$, $\gamma$ are positive. We will see that despite the apparent oversimplification of vanishing $\epsilon$, this dynamical system possesses an interesting oscillatory behaviour which is in accord with the observations in paleoclimatology. Therefore, even the simplest coupling of realistic energy and mass equations produces self-sustained internal climate oscillations.

A note about validity of the model (\ref{eqn:MainSystem}) is in order. First of all, as $\epsilon\rightarrow 0$ the smaller branch of the $\lambda$-nullcline disappears. The model should thus be valid only in the vicinity of the critical point lying on the $\lambda_2$ branch. On the other hand, the linear form of the continental albedo (\ref{eqn:alphac}) requires at most moderate variations of the ice sheet extent. Taking into account above arguments, we conclude that our simplifications are justified provided we do not allow for large excursions of $\lambda$.

\subsection{The phase plane}
Since $\lambda$ in (\ref{eqn:MainSystem}) appears only linearly in both of the given equations, we can immediately write the exact form of the nullclines (denoted by $f$ and $g$)
\begin{equation}
\begin{split}
	&\theta-\text{nullcline}: \quad \lambda =\frac{1}{\alpha_2}\left[ \frac{1}{\gamma}\left(1+\beta-(1-\gamma)\alpha_o(\theta)-\theta\right)-\alpha_1\right]=:f(\theta),\\
	&\lambda-\text{nullcline}: \quad \lambda = \frac{1}{4} \frac{\xi(\theta)}{1+\xi(\theta)}=:g(\theta).
\end{split}
\label{eqn:Nullclines}
\end{equation}
An exemplary plot of the phase plane is depicted on Fig. \ref{fig:PhasePlane}. We can quickly compute the respective derivatives 
\begin{equation}
	f'(\theta) = -\frac{1}{\gamma\alpha_2}\left((1-\gamma)\alpha_o'(\theta)+1\right), \quad g'(\theta) = \frac{1}{4}\frac{\xi'(\theta)}{\left(1+\xi(\theta)\right)^2}.
\label{eqn:NullclinesDerivatives}
\end{equation}
Observe that $g'$ is always positive while $f'$ can change its sign depending on whether the oceanic albedo $\alpha_o$ has a sufficiently steep gradient. For the real-world parameters $f$ has two local extrema and we denote them by $\theta_m$ and $\theta_M$. We can see that $f'$ is positive for $\theta\in(\theta_m,\theta_M)$ and negative otherwise.

\begin{figure}
	\centering
	\includegraphics[scale=0.85]{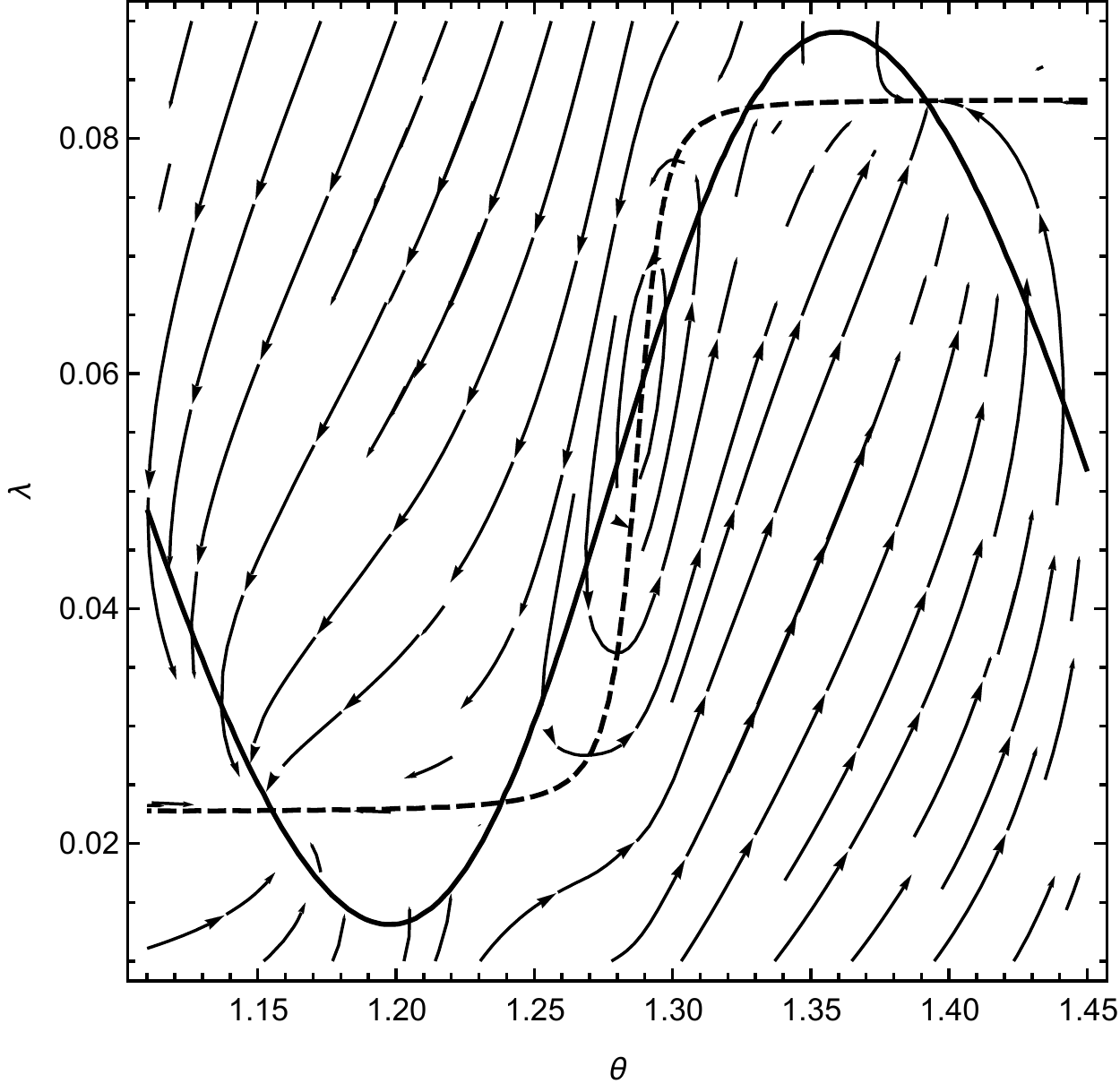}
	\caption{An exemplary phase plane along with nullclines $f$ (solid line) and $g$ (dashed line). The chosen parameters are not necessarily realistic and have been chosen to exemplify the qualitative features of the phase plane. In particular, $\alpha_0=0.25$, $\alpha_1=3.2$, $\alpha_+=0.25$, $\alpha_-=0.85$, $\theta_c=1.27$, $\Delta \alpha = 0.12$, $\xi_c=1.29$, $\Delta \xi = 0.01$. }
	\label{fig:PhasePlane}
\end{figure}

Since $\alpha_o(\theta)$ and $\xi(\theta)$ are both constructed with the sigmoid functions we can see that there can exist from one to five critical points of our system. With a simple geometrical reasoning we can distinguish several cases of their existence. 

\begin{prop}
	Let $n$ be a number of critical points of (\ref{eqn:MainSystem}) and denote $g_\pm := \lim\limits_{\theta\rightarrow \pm\infty} g(\theta)$. If $\Delta\alpha$ is small enough for $\alpha_o'(\theta) = (1-\gamma)^{-1}$ to have two zeros $\theta_m$ and $\theta_M$, then
	\begin{itemize}
		\item $n=1$ if $g_+ \leq f(\theta_m)$ or $g_- \geq f(\theta_M)$ or $\left(g(\theta_m) < f(\theta_m) \text{ and } g(\theta_M) > f(\theta_M)\right)$.
		\item $n\geq 3$ if $f(\theta_m) < g_-, g_+ < f(\theta_M)$.
		\item $n=5$ if and only if $f(\theta_m) < g_-, g_+ < f(\theta_M)$ and there exists a point $\theta_c\in(\theta_m,\theta_M)$ such that $f(\theta_c)=g(\theta_c)$ and $f'(\theta_c)<g'(\theta_c)$.
	\end{itemize}
	In the opposite case, we have $n=1$. 
\end{prop}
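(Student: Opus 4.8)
The plan is to reduce the whole question to counting the zeros of the single function $h(\theta):=f(\theta)-g(\theta)$ on the admissible range, since by \eqref{eqn:Nullclines} every intersection of the two nullclines with $g(\theta)=f(\theta)\in(0,1/4]$, $\theta>0$, is a critical point of \eqref{eqn:MainSystem}, and conversely. First I would record the qualitative shape of the two curves. From \eqref{eqn:NullclinesDerivatives} we have $g'>0$ everywhere, so $g$ is strictly increasing with $g(\theta)\to g_\pm$ as $\theta\to\pm\infty$ and $g_-\le g_+$. For $f$ the linear term $-\theta/(\gamma\alpha_2)$ forces $f(\theta)\to+\infty$ as $\theta\to-\infty$ and $f(\theta)\to-\infty$ as $\theta\to+\infty$, while the hypothesis on $\Delta\alpha$ makes $f'$ vanish exactly at $\theta_m$ and $\theta_M$; hence $f$ decreases on $(-\infty,\theta_m)$, increases on $(\theta_m,\theta_M)$, decreases on $(\theta_M,\infty)$, so $\theta_m$ is the unique local minimum and $\theta_M$ the unique local maximum.

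The organizing remark is that on the two outer intervals $(-\infty,\theta_m)$ and $(\theta_M,\infty)$ one has $f'<0<g'$, hence $h'<0$: there $h$ is strictly decreasing and vanishes at most once. Since $h(-\infty)=+\infty$ and $h(+\infty)=-\infty$, the entire count is then governed by the two signs $h(\theta_m)$, $h(\theta_M)$ and the number of zeros inside $(\theta_m,\theta_M)$. The first two $n=1$ bullets fall out cleanly because their hypotheses keep $h$ of one sign on a whole half-line: if $g_+\le f(\theta_m)$ then, using $f\ge f(\theta_m)$ on $(-\infty,\theta_M]$, we get $g\le g_+\le f(\theta_m)\le f$ there, so $h>0$ on $(-\infty,\theta_M]$ and the single zero sits in $(\theta_M,\infty)$; the case $g_-\ge f(\theta_M)$ is the mirror image with the single zero in $(-\infty,\theta_m)$.

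For the remaining bullets the sign pattern is the point. The inequalities $f(\theta_m)<g_-$ and $g_+<f(\theta_M)$ give $h(\theta_m)<0$ and $h(\theta_M)>0$, so there is one zero in each outer interval and, by the change of sign across $(\theta_m,\theta_M)$, an odd number (at least one) in between, whence $n\ge3$. To isolate $n=5$ I would argue that, under these same sign conditions, a single middle zero must be an up-crossing (since $h$ runs from $-$ to $+$), so the existence of an interior crossing $\theta_c$ at which $h$ decreases, i.e. $f'(\theta_c)<g'(\theta_c)$, is incompatible with a single middle zero and forces the pattern up, down, up, that is three middle zeros; conversely three middle zeros must contain such a downward crossing. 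Together with the two outer zeros this gives $n=5$ exactly when such a $\theta_c$ exists, and $n=3$ otherwise. The third $n=1$ bullet is the symmetric bookkeeping: $g(\theta_m)<f(\theta_m)$ and $g(\theta_M)>f(\theta_M)$ mean $h(\theta_m)>0$, $h(\theta_M)<0$, so both outer intervals are zero-free and the middle carries a single (downward) crossing.

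The hard part will be the a priori cap that $(\theta_m,\theta_M)$ carries at most three zeros, equivalently that the global count never exceeds five. This does \emph{not} follow from the shape facts alone: on $(\theta_m,\theta_M)$ both $f$ and $g$ are increasing, and two increasing curves can in principle meet arbitrarily often, so the parity argument fixes the middle count only modulo two. The cap must come from the specific monotone structure, by bounding the number of sign changes of $h'=f'-g'$ on $(\theta_m,\theta_M)$. I would exploit that, up to the additive constant in $f'$, both $f'$ and $g'$ are built from the derivatives of the sigmoids $\alpha_o$ and $\xi$, which are single-humped; controlling the solutions of $f'(\theta)=g'(\theta)$ through this unimodality limits the interior extrema of $h$ to two and hence its interior zeros to three. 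I would also be careful that the sharp necessary-and-sufficient phrasing is most naturally expressed through the signs of $h$ at $\theta_m$ and $\theta_M$, with the $g_\pm$ versions serving as the convenient sufficient conditions used above.
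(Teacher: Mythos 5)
The paper never writes this proof out (it is explicitly omitted as a ``consider each case and plot the nullclines'' exercise), and your reduction to counting zeros of $h:=f-g$ is exactly the right formalization of that geometric argument. The parts you complete are correct and complete: on $(-\infty,\theta_m)$ and $(\theta_M,\infty)$ one has $f'<0\le g'$, so $h$ is strictly decreasing there; combined with $h\to+\infty$ as $\theta\to-\infty$, $h\to-\infty$ as $\theta\to+\infty$ (the linear term in $f$ dominates the bounded $g$), and the sign bookkeeping at $\theta_m,\theta_M$, this cleanly settles the two $g_\pm$ cases of the first bullet and the $n\ge3$ bullet.

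The genuine gap is the cap on the number of interior crossings, and it is worse than you indicate. First, the route you propose for it fails: unimodality (``single-humpedness'') of $f'$ and $g'$ on $(\theta_m,\theta_M)$ does \emph{not} bound the number of solutions of $f'=g'$. On a stretch where both humps are rising (or both falling), one can interlace around the other while each remains monotone -- e.g.\ $f'(\theta)=\theta$ and $g'(\theta)=\theta+0.01\sin(20\pi\theta)$ are both increasing on $[0,1]$ yet cross twenty times -- so ``interior extrema of $h$ limited to two'' does not follow, and with it the exactness of $n=5$ and the ``only if'' direction collapse. Second, Definition \ref{def:Sigmoid} grants only boundedness and a non-negative derivative, so single-humped derivatives are an imported assumption, not a hypothesis of the proposition; indeed, for general Definition-\ref{def:Sigmoid} sigmoids the conclusion of the proposition can fail outright, which is presumably why the paper retreats to plotting. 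Third, your treatment of the last $n=1$ case is not ``symmetric bookkeeping'': in the first two cases $h$ keeps one sign on an entire half-line and no cap is needed at all, whereas $h(\theta_m)>0>h(\theta_M)$ only forces an \emph{odd} number of interior zeros. Even granting your (unproven) cap of three, three interior zeros are perfectly consistent with these signs -- two increasing curves with $f$ ahead at $\theta_m$ and $g$ ahead at $\theta_M$ can swap leads three times -- which would give $n=3$, contradicting the claimed $n=1$. So that bullet needs an ``exactly one'' statement, which is strictly stronger than the cap and is asserted in your write-up without any argument.
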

Since the above can be justified by considering each case and plotting the nullclines, we omit the proof. Because of a high number of various parameters the result is stated in terms of the values of nullclines at readily computable points. This makes the presentation clear. Note also that in the above observation we include only the most meaningful situations. Of course, one can arrange the parameters in a way to produce an even number of critical points but this would require having the nullclines tangent at some points. In reality, this situation is highly improbable however, we will return to this case in Proposition \ref{prop:Tangent}. 

The (local) stability of the critical points can be categorized according to the gradients of $f$ and $g$. Notice also that the creation and destruction of various stationary points depends on the parameters present in $\xi$ and $\alpha_o$ (for ex. $\Delta T$). Varying those can produce critical points through the saddle-node and pitchfork bifurcations. The precise conditions for them to occur are intrinsic to the specific nature of $\sigma$ functions used in construction of $\xi$ and $\alpha_o$. We will treat all parameters except $\mu$ to be fixed and consider all possible bifurcations. 

First, we give a simple result concerning the linearisation.

\begin{lem}
	Let $(\theta_c,\lambda_c)$ be one of the stationary points of (\ref{eqn:MainSystem}) and by $J$ denote its Jacobi linearisation matrix. Then
	\begin{equation}
		J = \left(
	\begin{matrix}
		\mu\alpha_2 \gamma f'_c & -\mu\alpha_2\gamma \vspace{4pt}\\
		\dfrac{\xi_c}{\sqrt{\lambda_c}}g'_c & -\dfrac{\xi_c}{\sqrt{\lambda_c}},
	\end{matrix}
	\right)
	\label{eqn:Jacobian}
	\end{equation}
	where we have used the short-hand notation $\xi_c:=\xi(\theta_c)$, $f'_c=f'(\theta_c)$ and $g'_c:=g'(\theta_c)$. Moreover, the eigenvalues of $J$ can be written as
	\begin{equation}
		r_{1,2} = \frac{1}{2}\left(\mu\alpha_2\gamma f'_c-\frac{\xi_c}{\sqrt{\lambda_c}}\pm \sqrt{\left(\mu\alpha_2\gamma f'_c-\frac{\xi_c}{\sqrt{\lambda_c}}\right)^2-4\mu\alpha_2\gamma \frac{\xi_c}{\sqrt{\lambda_c}}\left(g'_c-f'_c\right)}\right).
	\label{eqn:JacobianEigenvalues}
	\end{equation}
\end{lem}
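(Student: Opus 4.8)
The plan is to compute the Jacobian entrywise and then read off the eigenvalues from the trace and determinant. The top row needs no stationarity assumption: differentiating $F$ gives $F_\theta = \mu\left(-(1-\gamma)\alpha_o'(\theta)-1\right)$ and $F_\lambda = -\mu\alpha_2\gamma$. Comparing $F_\theta$ with the expression for $f'$ in \eqref{eqn:NullclinesDerivatives} shows immediately that $F_\theta = \mu\alpha_2\gamma\, f'(\theta)$, so the top row equals $(\mu\alpha_2\gamma f'_c,\,-\mu\alpha_2\gamma)$ identically in $\theta$.

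The bottom row is where the critical-point conditions must be used. From $G = \sqrt{\lambda}\bigl[(1+\xi(\theta))(1-4\lambda)-1\bigr]$ one gets $G_\theta = \sqrt{\lambda}\,\xi'(\theta)(1-4\lambda)$ and $G_\lambda = \tfrac{1}{2\sqrt{\lambda}}\bigl[(1+\xi)(1-4\lambda)-1\bigr] - 4\sqrt{\lambda}(1+\xi)$. Since $\lambda_c>0$ and $G(\theta_c,\lambda_c)=0$, stationarity forces $(1+\xi_c)(1-4\lambda_c)=1$. This one relation does all the work: it annihilates the first term of $G_\lambda$, leaving $G_\lambda|_c = -4\sqrt{\lambda_c}(1+\xi_c)$, and it rewrites the factor $1-4\lambda_c$ appearing in $G_\theta$ as $(1+\xi_c)^{-1}$, so $G_\theta|_c = \sqrt{\lambda_c}\,\xi'_c/(1+\xi_c)$. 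The same identity, read as $\lambda_c = \tfrac{1}{4}\xi_c/(1+\xi_c)$ (i.e.\ $\lambda_c = g(\theta_c)$), supplies $1+\xi_c = \xi_c/(4\lambda_c)$; substituting this collapses $G_\lambda|_c$ to $-\xi_c/\sqrt{\lambda_c}$ and, together with the formula for $g'$ in \eqref{eqn:NullclinesDerivatives}, turns $G_\theta|_c$ into $(\xi_c/\sqrt{\lambda_c})\,g'_c$. This yields exactly the matrix \eqref{eqn:Jacobian}.

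For the eigenvalues I would invoke the standard $2\times2$ formula $r_{1,2} = \tfrac{1}{2}\bigl(\operatorname{tr}J \pm \sqrt{(\operatorname{tr}J)^2 - 4\det J}\bigr)$. The trace is the sum of the diagonal entries, $\mu\alpha_2\gamma f'_c - \xi_c/\sqrt{\lambda_c}$, while a one-line computation of $\det J = ad-bc$ shows that both terms share the common factor $\mu\alpha_2\gamma\,\xi_c/\sqrt{\lambda_c}$ and combine to give $\mu\alpha_2\gamma\,(\xi_c/\sqrt{\lambda_c})(g'_c - f'_c)$. Inserting these into the formula reproduces \eqref{eqn:JacobianEigenvalues} verbatim.

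There is no conceptual difficulty here; the statement is an exercise in differentiation followed by bookkeeping. The only place demanding care is the bottom row, where one must impose the stationarity relation $(1+\xi_c)(1-4\lambda_c)=1$ before simplifying rather than manipulating $G_\theta$ and $G_\lambda$ for general $(\theta,\lambda)$. It is precisely this identity — equivalently $\lambda_c=g(\theta_c)$ — that reconciles the two superficially different expressions for the bottom-row entries, so the main difficulty is simply to recognise which critical-point relation to feed in and when.
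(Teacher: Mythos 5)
Your proof is correct and takes essentially the same route as the paper's: both compute the partial derivatives of $F$ and $G$ directly, simplify the bottom row at the stationary point via the $\lambda$-nullcline relation $\lambda_c=g(\theta_c)$ (your identity $(1+\xi_c)(1-4\lambda_c)=1$ is exactly this relation), and obtain the eigenvalues from the trace--determinant formula for a $2\times 2$ matrix. No gaps.
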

\begin{proof}
This is just a straightforward calculation. We start from collecting the formulas for partial derivatives of the vector field $(F,G)$ defined in (\ref{eqn:MainSystem})
\begin{equation}
\begin{split}
	\frac{\partial F}{\partial \theta} &= -\left((1-\gamma)\alpha'_o(\theta)-1\right),\quad \frac{\partial F}{\partial \lambda} = -\mu\alpha_2\gamma, \\
	\frac{\partial G}{\partial \theta} &= \sqrt{\lambda}(1-4\lambda)\xi'(\theta),\quad \frac{\partial G}{\partial \lambda} = \frac{1}{2\sqrt{\lambda}}\left(\xi(\theta)-12\lambda(1+\xi(\theta))\right).
\label{eqn:PartialDerivatives}
\end{split}
\end{equation}
Since on the $\lambda-$nullcline (\ref{eqn:Nullclines}), in particular for the stationary point, we have $\lambda=g(\theta)$ the above partial derivatives of $G$ at $(\theta_c,\lambda_c)$ reduce to
\begin{equation}
	\frac{\partial G}{\partial \theta} = \frac{\lambda_c}{\sqrt{\lambda_c}}\frac{\xi'_c}{1+\xi_c}=\frac{\xi_c}{\sqrt{\lambda_c}}g'_c,\quad \frac{\partial G}{\partial \lambda} = \frac{1}{2\sqrt{\lambda_c}}\left(\xi_c-3\frac{\xi_c}{1+\xi_c}(1+\xi_c)\right)=-\frac{\xi_c}{\sqrt{\lambda_c}}.
\end{equation}
Moreover, the formula for the eigenvalues follows from the fact that
\begin{equation}
	r_{1,2} = \frac{1}{2}\left(\text{tr}J\pm\sqrt{\left(\text{tr}J\right)^2-4\det J}\right).
\end{equation}
This ends the proof.
\end{proof}
Notice that knowing (\ref{eqn:PartialDerivatives}) we can easily find the direction of the vector field $(F,G)$ since it vanishes on the nullclines and is monotone in their neighbourhood. 

Right now we are in position to subsequently classify the stability of all types of the stationary points that can arise in our system. At every critical point the nullclines $f$ and $g$ can cross transversalily or be tangent to each other. The transversality is the crucial property and can happen in two ways: either $f'>g'$ or otherwise. This classifies the particular cases of stability.

\begin{prop}
	\label{thm:Stable}
	Let $(\theta_c,\lambda_c)$ be any critical point of (\ref{eqn:MainSystem}) for which $f'(\theta_c)<0$. Then it is locally
	\begin{itemize}
		\item a stable node for $\mu\in(0,\mu_1] \cup [\mu_2,\infty)$,
		\item a stable focus for $\mu\in(\mu_1,\mu_2)$,
	\end{itemize}
	where
	\begin{equation}
		\mu_{1,2} := \frac{\xi_c}{\alpha_2\gamma \left(f'_c\right)^2 \sqrt{\lambda_c}}\left(2g'_c-f'_c\pm\sqrt{g'_c\left(g'_c-f'_c\right)}\right)>0.
	\label{eqn:Mu12}
	\end{equation}
\end{prop}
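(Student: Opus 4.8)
The plan is to extract stability directly from the trace and determinant of the Jacobian $J$ in (\ref{eqn:Jacobian}), and then to separate the node and focus régimes by tracking the sign of the quantity under the square root in (\ref{eqn:JacobianEigenvalues}), regarded as a function of the bifurcation parameter $\mu$.

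First I would record
\[
\text{tr}J = \mu\alpha_2\gamma f'_c - \frac{\xi_c}{\sqrt{\lambda_c}}, \qquad \det J = \mu\alpha_2\gamma\frac{\xi_c}{\sqrt{\lambda_c}}\left(g'_c - f'_c\right).
\]
Under the hypothesis $f'_c < 0$, together with $g'_c > 0$ (established right after (\ref{eqn:NullclinesDerivatives})) and the positivity of $\mu,\alpha_2,\gamma,\xi_c,\lambda_c$, both summands in $\text{tr}J$ are negative, so $\text{tr}J < 0$; and since $g'_c - f'_c > 0$ we have $\det J > 0$. By the standard trace--determinant criterion the two eigenvalues then have negative real part, so the critical point is asymptotically stable for every admissible $\mu$. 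This settles stability outright and reduces the proposition to deciding, as $\mu$ varies, whether the eigenvalues are real (a node) or a complex conjugate pair (a focus).

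Next I would treat the discriminant
\[
\Delta(\mu) := \left(\text{tr}J\right)^2 - 4\det J = \alpha_2^2\gamma^2\left(f'_c\right)^2\mu^2 - 2\alpha_2\gamma\frac{\xi_c}{\sqrt{\lambda_c}}\left(2g'_c - f'_c\right)\mu + \frac{\xi_c^2}{\lambda_c}
\]
as a quadratic in $\mu$ with positive leading coefficient. Its own discriminant is a positive multiple of $(2g'_c - f'_c)^2 - (f'_c)^2 = 4g'_c(g'_c - f'_c) > 0$, so $\Delta$ has two distinct real roots; by Vieta these have positive product $\xi_c^2/(\alpha_2^2\gamma^2(f'_c)^2\lambda_c)$ and, because $2g'_c - f'_c > 0$, positive sum, hence both are positive. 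Solving $\Delta(\mu) = 0$ yields exactly the two thresholds $\mu_1 < \mu_2$ of (\ref{eqn:Mu12}). Since $\Delta$ is an upward parabola vanishing only at $\mu_1,\mu_2$, one has $\Delta(\mu) \geq 0$ on $(0,\mu_1]\cup[\mu_2,\infty)$ and $\Delta(\mu) < 0$ on $(\mu_1,\mu_2)$; combined with $\text{tr}J < 0$ this gives a stable node on the former set and a stable focus on the latter, as claimed.

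The only delicate points are bookkeeping: one must verify that the inner discriminant factors cleanly as a positive multiple of $g'_c(g'_c - f'_c)$, so that its square root is real and the external factors $\alpha_2\gamma$ and $\xi_c/\sqrt{\lambda_c}$ cancel down to the compact closed form (\ref{eqn:Mu12}); and one must confirm the sign conditions $2g'_c - f'_c > 0$ and $g'_c - f'_c > 0$ that make $\mu_{1,2}$ real and positive. All of these follow immediately from $f'_c < 0$ and $g'_c > 0$, so I anticipate no genuine obstacle---the main care needed is simply to keep the quadratic-formula algebra consistent with the stated expression (\ref{eqn:Mu12}).
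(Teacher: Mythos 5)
Your argument is correct and follows essentially the same route as the paper's proof: the paper likewise observes $\text{tr}\,J<0$, reduces the node/focus dichotomy to the sign of the same quadratic $\Delta(\mu)=(\text{tr}\,J)^2-4\det J$, and uses Vieta's formulas to conclude that both of its roots are positive. The only difference is cosmetic --- you read off stability from the trace--determinant criterion, while the paper checks the eigenvalue signs by hand, rationalizing the ``plus'' eigenvalue into the form (\ref{eqn:R2}) to see it is negative.

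However, one of the steps you deferred as mere bookkeeping does not come out as you claim, and it is worth flagging. You correctly note that the inner discriminant is a positive multiple of $(2g'_c-f'_c)^2-(f'_c)^2=4g'_c(g'_c-f'_c)$; but upon taking the square root that $4$ becomes a $2$, so the roots of $\Delta(\mu)=0$ are
\begin{equation*}
	\mu_{1,2}=\frac{\xi_c}{\alpha_2\gamma\left(f'_c\right)^2\sqrt{\lambda_c}}\left(2g'_c-f'_c\pm 2\sqrt{g'_c\left(g'_c-f'_c\right)}\right),
\end{equation*}
with a factor $2$ in front of the square root that is absent from (\ref{eqn:Mu12}). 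A quick numerical check confirms this: with $\alpha_2\gamma=\xi_c/\sqrt{\lambda_c}=1$, $f'_c=-1$, $g'_c=1$ the quadratic is $\mu^2-6\mu+1$, whose roots are $3\pm2\sqrt{2}$, not $3\pm\sqrt{2}$. So your assertion that the algebra ``cancels down to the compact closed form (\ref{eqn:Mu12})'' is not literally true: the printed formula (\ref{eqn:Mu12}) carries a typo, which the paper's own proof also glosses over (``they can be easily calculated to yield (\ref{eqn:Mu12})''). The discrepancy is not purely aesthetic: with the factor $2$ in place, the Hopf value $\mu_0=\xi_c/(\alpha_2\gamma f'_c\sqrt{\lambda_c})$ of the subsequent theorem always lies in $(\mu_1,\mu_2)$ when $f'_c>0$, as that theorem asserts via the monotonicity of the quadratic, whereas the printed formula would guarantee this only when $g'_c<\tfrac{4}{3}f'_c$. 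None of this affects the qualitative dichotomy (stable node on $(0,\mu_1]\cup[\mu_2,\infty)$, stable focus on $(\mu_1,\mu_2)$), which your proof, with the corrected constant, establishes.
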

\begin{proof}
Since $f'_c<0$ we immediately have $\text{tr} J = \mu \alpha_2 \gamma f_c' - \frac{\xi_c}{\sqrt{\lambda_c}} < 0$ and thus the first term in both the eigenvalues (\ref{eqn:JacobianEigenvalues}) is negative. Moreover, because $g$ is always increasing we have $f'_c < g'_c$. Therefore, the quantity under the square root (\ref{eqn:JacobianEigenvalues}) can become negative for a suitable choice of $\mu > 0$. 

Immediately we notice that if the eigenvalues are conjugate complex the critical point is a stable focus since, as we mentioned before, $\text{Re} \; r_{1,2} < 0$. Next, observe that if the eigenvalues are real they have to be negative. The one with the minus sign is obviously smaller than zero while the other can be transformed into
\begin{equation}
	r_1 = \frac{2\mu\alpha_2\gamma \frac{\xi_c}{\sqrt{\lambda_c}}\left(g'_c-f'_c\right)}{\mu\alpha_2\gamma f'_c-\frac{\xi_c}{\sqrt{\lambda_c}}-\sqrt{\left(\mu\alpha_2\gamma f'_c-\frac{\xi_c}{\sqrt{\lambda_c}}\right)^2-4\mu\alpha_2\gamma \frac{\xi_c}{\sqrt{\lambda_c}}\left(g'_c-f'_c\right)}} < 0.
\label{eqn:R2}
\end{equation}
Therefore $(\theta_c,\lambda_c)$ is a stable node. In any case, the critical point is locally stable.

We are left with finding the precise conditions for the transition from focus to the node to occur. According to (\ref{eqn:JacobianEigenvalues}) we have to check whether the squared term chenges its sign. It is a quadratic equation for $\mu$ and can be written as
\begin{equation}
	\left(\alpha_2\gamma f'_c\right)^2 \mu^2 - 2 \alpha_2\gamma\frac{\xi_c}{\sqrt{\lambda_c}}\left(2g'-f'\right)\mu + \frac{\xi_c^2}{\lambda_c}.
\end{equation}
From the Vieta's formulas we immediately know that the zeros of the above are both positive. They can be easily calculated to yield (\ref{eqn:Mu12}).
\end{proof}

When we return to Fig. \ref{fig:PhasePlane} we notice that for the two outermost critical points we have $f'_c<0$ and hence, these are locally stable. The physical interpretation of this climate state is that we have either low temperature with a very small ice sheet or high temperature with a very large ice sheet. This situation is set up by the precipitation-temperature feedback and probably have to be corrected by allowing the snow line to vary with the temperature, i.e. $\epsilon=\epsilon(\theta)$. Since our simple model is tailored to investigate the other case we conclude that the two outermost critical points could not give a sufficient representation of the climate state. 

Now, we move to the most important and physical case where $f$ is increasing. 

\begin{thm}
	Let $(\theta_c,\lambda_c)$ be a critical point of (\ref{eqn:MainSystem}) for which $f'_c > 0$. Then, the following holds.
	\begin{itemize}
		\item When $g'_c < f'_c$ the critical point is a saddle for all all $\mu>0$.
		\item When $g'_c > f'_c$ the Hopf bifurcation takes place as $\mu$ passes
		\begin{equation}
			\mu_0 := \frac{\xi_c}{\alpha_2 \gamma f'_c \sqrt{\lambda_c}}.
		\label{eqn:Mu0}
		\end{equation}
		In this bifurcation a limit cycle is created with an approximate angular frequency 
		\begin{equation}
			\omega_0 = \frac{\xi_c}{\sqrt{\lambda_c}}\sqrt{\frac{g'_c}{f'_c}-1}.
		\label{eqn:Omega0}
		\end{equation}
		Its stability can be determined from the sign of the first Lyapunov coefficient $l_1(\mu_0)$ which, for clarity of presentation, is given in the Appendix (bifurcation is supercritical for $l_1(\mu_0)<0$ and subcritical otherwise).

		Moreover, the critical point locally is a 
		\begin{itemize}
			\item stable node for $\mu\in(0,\mu_1]$,
			\item stable focus for $\mu\in(\mu_1,\mu_0)$,
			\item unstable focus for $\mu\in(\mu_0,\mu_2)$,
			\item unstable node for $\mu\in[\mu_2,\infty)$,
		\end{itemize}
	where $\mu_{1,2}$ are defined in (\ref{eqn:Mu12}).
	\end{itemize}
\end{thm}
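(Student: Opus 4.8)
The plan is to read off everything from the trace and determinant of the Jacobian $J$ furnished by the Lemma. A direct computation gives $\text{tr}\, J = \mu\alpha_2\gamma f'_c - \xi_c/\sqrt{\lambda_c}$ and $\det J = \mu\alpha_2\gamma(\xi_c/\sqrt{\lambda_c})(g'_c - f'_c)$. Since $\mu$, $\alpha_2$, $\gamma$, $\xi_c$ and $\sqrt{\lambda_c}$ are all positive, the sign of $\det J$ is governed solely by $g'_c - f'_c$, and this one observation settles the first bullet: when $g'_c < f'_c$ we have $\det J < 0$ for every $\mu > 0$, so the eigenvalues are real of opposite sign and the point is a saddle.

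For the second bullet I would verify the three conditions of the Hopf bifurcation theorem in turn. Setting $\text{tr}\, J = 0$ immediately yields the crossing value $\mu_0 = \xi_c/(\alpha_2\gamma f'_c\sqrt{\lambda_c})$ of (\ref{eqn:Mu0}); and since $g'_c > f'_c$ forces $\det J > 0$, at $\mu = \mu_0$ the eigenvalues are purely imaginary, $r_{1,2} = \pm i\sqrt{\det J}$. Substituting $\mu_0$ into $\det J$ and simplifying gives $\det J|_{\mu_0} = (\xi_c^2/\lambda_c)(g'_c/f'_c - 1)$, so the bifurcation frequency is $\omega_0 = \sqrt{\det J|_{\mu_0}}$, which is exactly (\ref{eqn:Omega0}). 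For transversality I would use that near $\mu_0$ the eigenvalues form a complex-conjugate pair with $\text{Re}\, r_{1,2} = \tfrac12\text{tr}\, J$, so that $\frac{d}{d\mu}\text{Re}\, r_{1,2}\big|_{\mu_0} = \tfrac12\alpha_2\gamma f'_c > 0$ and the pair crosses the imaginary axis from left to right. Together with the nonvanishing of the first Lyapunov coefficient $l_1(\mu_0)$, whose explicit evaluation I would defer to the Appendix, these conditions produce a branch of periodic orbits, supercritical precisely when $l_1(\mu_0) < 0$.

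The four-fold stability classification then follows by tracking the discriminant $D(\mu) = (\text{tr}\, J)^2 - 4\det J$ against the sign of the trace. Expanding $D$ reproduces the very quadratic in $\mu$ met in the proof of Proposition \ref{thm:Stable}, whose two positive roots are the $\mu_{1,2}$ of (\ref{eqn:Mu12}); hence $D < 0$ (a focus) exactly on $(\mu_1,\mu_2)$ and $D \geq 0$ (a node) outside it. Since $\text{tr}\, J < 0$ for $\mu < \mu_0$ and $\text{tr}\, J > 0$ for $\mu > \mu_0$, combining the two sign patterns gives stable node, stable focus, unstable focus and unstable node on the four stated intervals, once the ordering $\mu_1 < \mu_0 < \mu_2$ is confirmed. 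That ordering is automatic: at $\mu_0$ the trace vanishes, so $D(\mu_0) = -4\det J|_{\mu_0} < 0$, placing $\mu_0$ strictly inside the focus interval.

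I expect no serious obstacle in the theorem itself, since each step amounts to reading signs off the trace and determinant and matching two elementary substitutions to the claimed formulas. The only genuinely laborious computation is the first Lyapunov coefficient, which decides the stability of the emerging cycle; this is the single piece I would isolate in the Appendix rather than carry out inline.
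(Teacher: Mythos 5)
Your proposal is correct and follows essentially the same route as the paper: both arguments classify the critical point through the linearization given in the Lemma, identify $\mu_0$ as the zero of $\text{tr}\, J$, verify transversality via $\left.\frac{d}{d\mu}\text{Re}\, r_{1,2}\right|_{\mu_0} = \tfrac{1}{2}\alpha_2\gamma f'_c > 0$, and obtain the node/focus intervals from the two positive roots $\mu_{1,2}$ of the discriminant quadratic, exactly as in Proposition \ref{thm:Stable}. Your determinant-sign argument for the saddle case is a marginally cleaner packaging of the paper's case analysis on the trace, and, like the paper, you relegate the only laborious step --- the normal-form transformation and evaluation of $l_1(\mu_0)$ --- to the Appendix.
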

\begin{proof}
We start from the case of $g'_c - f'_c<0$. Then, from (\ref{eqn:JacobianEigenvalues}) we can see that the term under the square root is always positive which implies that the eigenvalues are real for every $\mu > 0$. Assume that $\mu$ is such that $\text{tr} J \leq 0$, then the eigenvalue with a minus sign is negative while for the second we have a formula (\ref{eqn:R2}) which shows that it has to be positive. Hence, the eigenvalues have opposite signs and the critical point is a saddle. The case $\text{tr} J > 0$ is analogous. 

Now, assume that $g'_c - f'_c > 0$ and notice that when determining the sign of the term under the square root in (\ref{eqn:JacobianEigenvalues}) we arrive at exactly the same situation as before in Theorem \ref{thm:Stable}. Hence, the critical point is a node for $\mu\in(0,\mu_1] \cup [\mu_2,\infty)$ and a focus for $\mu\in(\mu_1,\mu_2)$. The difference from this previous case is that its stability can change when $\mu$ increases. 

When $\mu=\mu_0$ the trace of the Jacobian matrix vanishes and the eigenvalues are purely imaginary. From the monotonicity of the quadratic we thus have $\mu_0 \in (\mu_1,\mu_2)$ and 
\begin{equation}
	\text{Re}\; r_{1,2}(\mu_0) = 0, \quad \text{Im}\;r_{1,2}(\mu_0) = \pm i\omega_0,
\end{equation}
where $\omega_0$ is defined in (\ref{eqn:Omega0}). Moreover,
\begin{equation}
	\left.\frac{d}{d\mu}\right|_{\mu=\mu_0} \text{Re} \; r_{1,2} = \frac{1}{2} \alpha_2 \gamma f_c' > 0,
\end{equation}
hence the critical points cross the imaginary axis with nonzero speed. Therefore, by the Hopf bifurcation theorem \cite{Guc13,Kuz13} we conclude that a limit cycle is born when $\mu\rightarrow \mu_0$. The stability $\mu$-intervals can be obtained essentially in the same way as before with the use of (\ref{eqn:R2}).

The last part of the theorem to prove is the form of the Lyapunov coefficient $l_1(\mu_0)$. First, we will transform (\ref{eqn:MainSystem}) into a standard form which helps us to utilize an explicit formula for $l_1(\mu_0)$. First, since at $\mu=\mu_0$ the Jacobian matrix has a conjugate pair of purely imaginary eigenvalues, we have
\begin{equation}
	J \textbf{q} = i \omega_0 \textbf{q},
\end{equation}
where the eigenvector has a form
\begin{equation}
	\textbf{q} = \left(
	\begin{array}{c}
		\frac{1}{g'_c}\left(1+i\sqrt{\dfrac{g'_c}{f'_c}-1}\right) \vspace{4pt}\\
		1
	\end{array}
	\right).
\end{equation}
Next, define the two transition matrices by
\begin{equation}
	S=\left(
	\begin{array}{cc}
		\textbf{q} & \overline{\textbf{q}}
	\end{array}\right),
	\quad
	T=\frac{1}{\sqrt{2}}\left(
	\begin{array}{cc}
	1 & i \\
	1 & -i
	\end{array}\right).
\end{equation}
If we now introduce a new coordinate set by
\begin{equation}
	\left(
	\begin{array}{c}
		\psi \\
		\kappa
	\end{array}\right) = T^{-1} S^{-1} 
	\left(
	\begin{array}{c}
		\theta-\theta_c \\
		\lambda-\lambda_c
	\end{array}
	\right)
\end{equation}
then (\ref{eqn:MainSystem}) transforms into
\begin{equation}
	\frac{d}{dt}
	\left(
	\begin{array}{c}
		\psi \\
		\kappa
	\end{array}\right) =  
	\left(
	\begin{array}{cc}
		0 & -\omega_0 \\
		\omega_0 & 0
	\end{array}
	\right)
	\left(
	\begin{array}{c}
		\psi \\
		\kappa
	\end{array}
	\right) + 
	\left(
	\begin{array}{c}
		P \\
		Q
	\end{array}
	\right),
\end{equation}
where 
\begin{equation}
\begin{split}
	\left(
	\begin{array}{c}
	P(\psi,\kappa) \\
	Q(\psi,\kappa)
	\end{array}
	\right) &= 
	\left(
	\begin{array}{c}
		G\left(\theta\lambda\right)+\omega_0 \kappa \\
		\left(g'_c/f'_c-1\right)^{-1/2}\left[G\left(\theta,\lambda\right)- \frac{\xi_c}{\alpha_2\gamma\sqrt{\lambda_c}}\frac{g'_c}{f'_c}F\left(\theta,\lambda\right)\right]-\omega_0 \psi
	\end{array} 
	\right) \\
	&\text{where } (\theta,\lambda)=\left(\theta_c+(g'_c)^{-1}\left(\psi-\kappa\left(g'_c/f'_c-1\right)^{1/2}\right),\lambda_c+\psi\right).
\end{split}
\end{equation}
Then, the first Lyapunov coefficient can be calculated via the formula (\cite{Guc13}, (3.4.29))
\begin{equation}
	\begin{split}
	l_1(\mu_0) &= \frac{1}{8\omega_0}(P_{\psi\psi\psi}+P_{\psi\kappa\kappa}+Q_{\psi\psi\kappa}+Q_{\kappa\kappa\kappa}) \\
	&+\frac{1}{8\omega_0^2}\left[P_{\psi\kappa}(P_{\psi\psi}+P_{\kappa\kappa}) -Q_{\psi\kappa}(Q_{\psi\psi}+Q_{\kappa\kappa})-P_{\psi\psi}Q_{\psi\psi}+P_{\kappa\kappa}Q_{\kappa\kappa}\right],
	\end{split}
\label{eqn:LyapunovFormula}
\end{equation}
where subscripts denote partial derivatives evaluated at $(\psi,\kappa)=(0,0)$. A straightforward calculation leads to (\ref{eqn:LyapunovCoefficient}).
\end{proof}
An exemplary plot of the limit cycle is presented on Fig. \ref{fig:TempIce} where we have used the parameters from Tab. \ref{tab:Parameters}. During our simulations we have noticed that although it is easy to control the position of the critical point, the stability of the limit cycle is much more difficult to anticipate. This is of course due to the rather complex form of (\ref{eqn:LyapunovCoefficient}) which involves several competitive terms. However, due to the sigmoid nature of $\xi$ and $\alpha_o$ their higher derivatives are very singular near their inflection point and thus, the presence of third derivatives in (\ref{eqn:LyapunovCoefficient}) can dominate the behaviour of $l_1(\mu_0)$. The precise analysis of the Lyapunov coefficient is one of the objectives of our future work.

\begin{figure}
	\centering
	\includegraphics[scale=0.85, trim = 100 0 0 0]{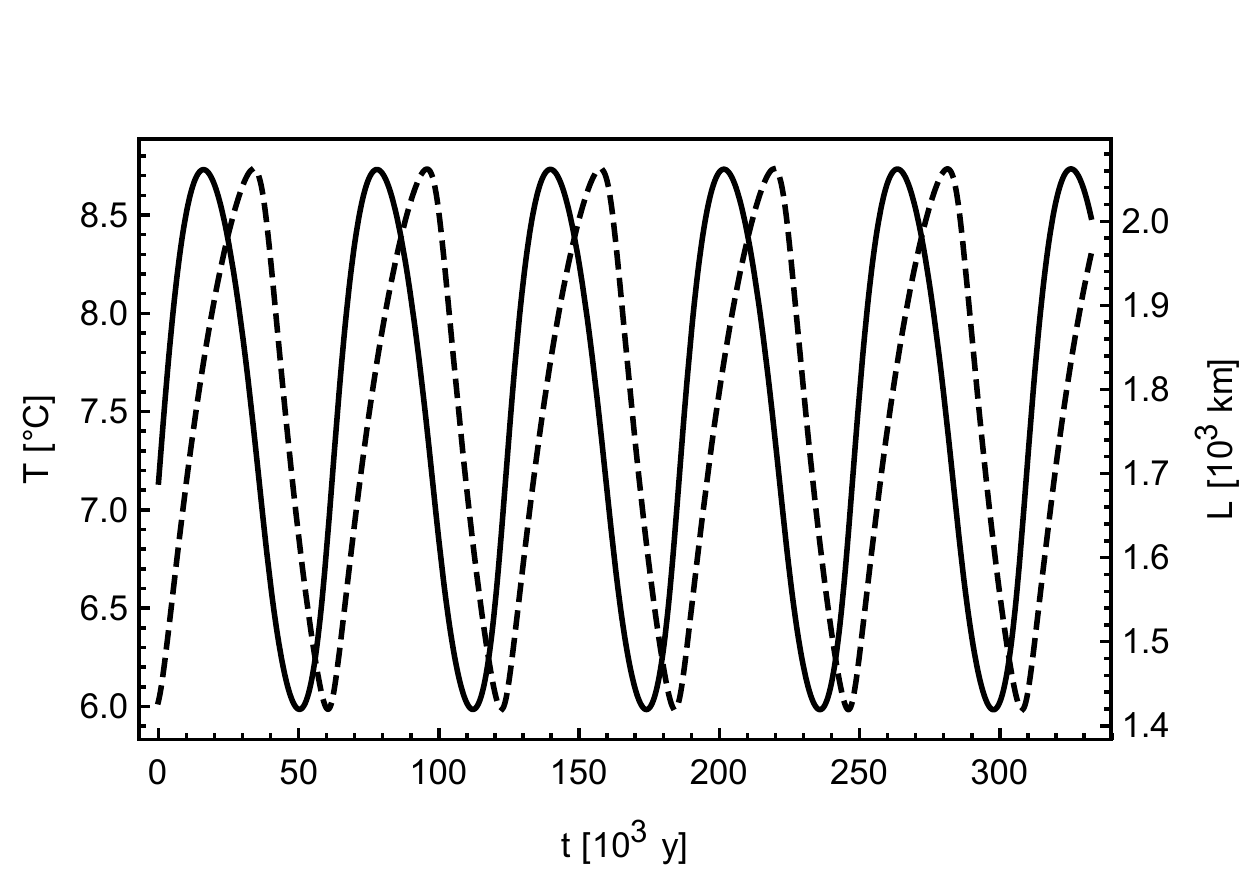}
	\includegraphics[scale=0.65, trim = 0 0 80 0]{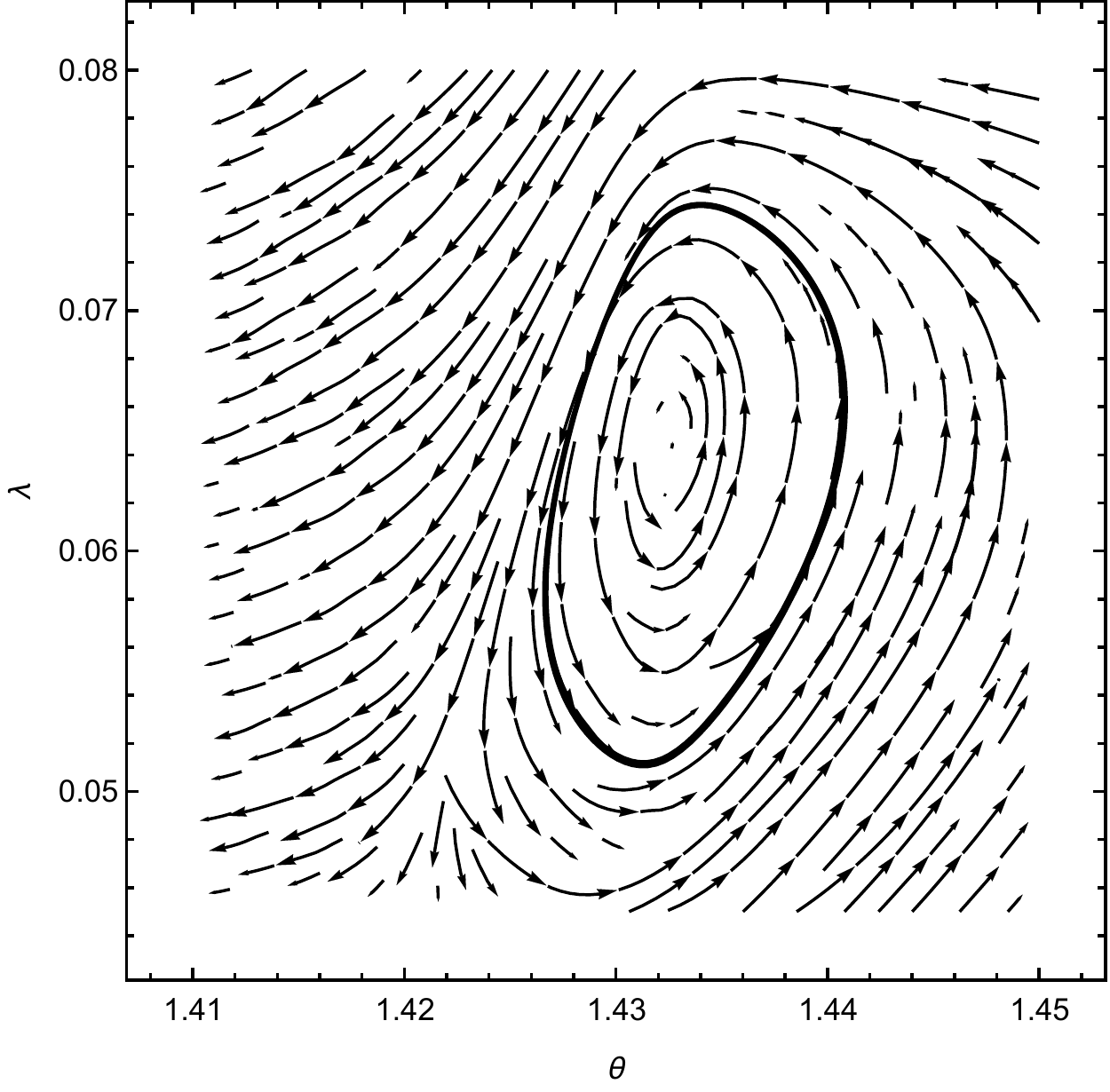}
	\caption{Time series of dimensional temperature and ice sheet extent (left) and the phase plane of their dimensionless versions (right). Parameters used for this simulations are listed in Tab. \ref{tab:Parameters} along with $\mu=1.0545 \mu_0 = 2.0196$ and $\mu_0=1.915$. The limit cycle is drawn in thick while its Lyapunov coefficient is $l_1(\mu_0) = -162.3$. This is the largest limit cycle found numerically.}
	\label{fig:TempIce}
\end{figure}

Looking at Fig. \ref{fig:TempIce} we immediately can see that although the position of the critical point is reasonable, the amplitude of the oscillations is smaller than the one predicted in the original work (\cite{Kal97}). During ice ages, the temperature oscillates through several degrees. The fundamental reason for such behaviour is putting $\epsilon = 0$ at the beginning of our analysis. This forbids the snow line to follow the temperature changes and hence, to increase the temperature amplitude. This can be seen from simple physical reasoning. If the temperature is high, there is a larger snowfall at high latitudes promoting the growth of the ice sheet. Then, the large ice extent increases albedo which, in turn, lowers the temperature. In the case of $\epsilon\neq 0$ the snow line gets lower and shrinks the ablation zone. Eventually, when the precipitation is low enough the melting dominates and ice retreats. Temperature rises forcing the snow line up and the oscillation continues so forth. On the other hand, when $\epsilon = 0$ there is no additional mechanism which shortens the ablation zone and hence, the amplitude of oscillations is smaller than in the other case. 

On the other hand, when $\mu$ passes through $\mu_0$ the amplitude of the oscillations grows until it reaches the size depicted on Fig. \ref{fig:TempIce}. This is in accord with the regime of model's validity - the excursions from the equilibrium should not be large. Moreover, if the parameters were tweaked for the nullclines to intersect precisely at one point, the limit cycle would not be attracted by another critical point. The oscillations would persist in a form of asymmetric relaxations-oscillations. The analysis of this case has been done in \cite{Plo19}.  

We stress, however, the fact that the period of the oscillations is of the right order that is present in the nature, that is $10-100$k years (in our case - $40$k years). Probably, it is possible to get more realistic results by varying the parameters (especially steepness $\Delta \alpha$, $\Delta \xi$ and centres $T_c$, $\xi_c$) but our goal is to show that even for the most simple model of ice-albedo and precipitation-temperature feedbacks there is a stable limit cycle. 

Lastly, we state one more result about the possible type of the critical point. 

\begin{prop}
\label{prop:Tangent}
	Let $(\theta_c,\lambda_c)$ be a critical point of (\ref{eqn:MainSystem}) for which $f'_c=g'_c>0$. Then, if $\mu>\mu_0$ it is unstable while for $\mu<\mu_0$ its stability is the same as the stability of the $\psi=0$ point of the following differential equation
	\begin{equation}
		\frac{d\psi}{d\tau} = \frac{2 \alpha _2 \gamma  \mu \xi_c^2}{ \left(\alpha _2 \gamma  \sqrt{\lambda_c } \mu  f'_c-\xi_c \right)^3}\left(f_c''-g''_c\right)\psi^2 + \text{ higher order terms}.
	\label{eqn:CenterManifoldTheorem}
	\end{equation}
	Therefore, if $f''_c\neq g''_c$ the critical point is unstable. 
\end{prop}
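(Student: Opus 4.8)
The plan is to exploit the fact that the tangency condition $f'_c=g'_c$ makes the Jacobian (\ref{eqn:Jacobian}) singular. From (\ref{eqn:Jacobian}) one computes $\det J=\mu\alpha_2\gamma\frac{\xi_c}{\sqrt{\lambda_c}}(g'_c-f'_c)$, which vanishes precisely when $f'_c=g'_c$. Hence one eigenvalue is $0$ and the other equals $\operatorname{tr}J=\mu\alpha_2\gamma f'_c-\xi_c/\sqrt{\lambda_c}$, which is positive for $\mu>\mu_0$, negative for $\mu<\mu_0$, and vanishes at $\mu=\mu_0$ (cf.\ (\ref{eqn:Mu0})). The case $\mu>\mu_0$ is then immediate: the linearisation has a strictly positive eigenvalue, so the associated one-dimensional unstable manifold forces $(\theta_c,\lambda_c)$ to be unstable regardless of the behaviour along the center direction.

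For $\mu<\mu_0$ the equilibrium is non-hyperbolic, carrying a zero and a negative eigenvalue, so I would invoke the center manifold theorem \cite{Guc13} and reduce the stability question to the flow on the one-dimensional center manifold. First I would pass to the eigenbasis of $J$: the kernel is spanned by $(1,f'_c)^\top$, which, since $f'_c=g'_c$, is exactly the common tangent direction of the two nullclines, while the stable eigenvector is $(1,\tfrac{\mu_0}{\mu}f'_c)^\top$. Writing the deviations $(\theta-\theta_c,\lambda-\lambda_c)^\top=S(\psi,\kappa)^\top$, with $S$ the matrix of these eigenvectors, brings the linear part to $\operatorname{diag}(0,\operatorname{tr}J)$, with $\psi$ the center coordinate and $\kappa$ the stable one. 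The center manifold is a graph $\kappa=h(\psi)=O(\psi^2)$; since the reduced equation's leading nonlinearity is quadratic and the stable eigenvalue is nonzero, this $O(\psi^2)$ correction influences only cubic and higher terms, so the $\psi^2$ coefficient may be read off by setting $\kappa=0$, i.e.\ by evaluating the nonlinearity along the center eigenvector $(\theta,\lambda)=(\theta_c+\psi,\lambda_c+f'_c\psi)$ and projecting onto the first row of $S^{-1}$. By the reduction principle the stability of $(\theta_c,\lambda_c)$ then coincides with that of $\psi=0$ for the reduced scalar equation, which is the equivalence asserted in the proposition.

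Concretely, the reduced equation takes the form $\dot\psi=a\psi^2+(\text{h.o.t.})$ with $a=(S^{-1})_{11}\tfrac12 F_{\theta\theta}+(S^{-1})_{12}\big(\tfrac12 G_{\theta\theta}+f'_c G_{\theta\lambda}+\tfrac12 (f'_c)^2 G_{\lambda\lambda}\big)$, all partials taken at the critical point. Here $F_{\theta\theta}=\mu\gamma\alpha_2 f''_c$ because $F$ is linear in $\lambda$, whereas the second derivatives of $G$, obtained by differentiating (\ref{eqn:PartialDerivatives}) and using $\lambda_c=g(\theta_c)$, introduce $\xi''_c$ (hence $g''_c$) together with several lower-order terms. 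The main obstacle, and the only genuinely laborious step, is to check that these lower-order contributions cancel so that, after substituting $\xi'_c=4(1+\xi_c)^2 g'_c$ from (\ref{eqn:NullclinesDerivatives}) and the corresponding expression for $\xi''_c$, the coefficient collapses to exactly (\ref{eqn:CenterManifoldTheorem}), namely $a=\frac{2\alpha_2\gamma\mu\xi_c^2}{(\alpha_2\gamma\sqrt{\lambda_c}\mu f'_c-\xi_c)^3}(f''_c-g''_c)$; note that the denominator equals $(\sqrt{\lambda_c}\operatorname{tr}J)^3<0$ for $\mu<\mu_0$, so $a\neq 0$ whenever $f''_c\neq g''_c$. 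Finally, for the scalar equation $\dot\psi=a\psi^2+\cdots$ with $a\neq 0$ the origin is semistable and hence not Lyapunov stable, so by the reduction principle the full critical point is unstable, which is the stated conclusion.
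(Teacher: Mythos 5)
Your proposal follows, in essence, the same route as the paper: you detect the zero eigenvalue from $\det J=\mu\alpha_2\gamma\,\xi_c(g'_c-f'_c)/\sqrt{\lambda_c}$, settle the case $\mu>\mu_0$ by the positive eigenvalue $\operatorname{tr}J$, and for $\mu<\mu_0$ reduce to a scalar equation on the one-dimensional center manifold written in the eigenbasis of $J$ (your vectors $(1,f'_c)^\top$ and $(1,\tfrac{\mu_0}{\mu}f'_c)^\top$ span the same directions as the paper's $\mathbf{p}$, $\mathbf{q}$). The one genuine difference is a worthwhile shortcut: the paper solves the invariance equation (\ref{eqn:CenterManifoldPDE}) for the quadratic coefficient $c_2$ of the manifold, whereas you observe that, because the linear part is diagonal and $h(\psi)=O(\psi^2)$, the curvature of the manifold only enters at cubic order, so the $\psi^2$ coefficient of the reduced flow is obtained by restricting the quadratic part of the vector field to the center eigenspace and projecting with the first row of $S^{-1}$. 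That observation is correct, and it shows the paper's computation of $c_2$ is in fact not needed for the quadratic term.

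Two caveats. First, you defer the only step that actually decides the result, namely the cancellation showing the coefficient is proportional to $f''_c-g''_c$. It does go through: using the tangency relation $\xi'_c=\xi_c^2 f'_c/(4\lambda_c^2)$ and the second derivatives of $G$ at the critical point one finds
\begin{equation*}
	\tfrac12 G_{\theta\theta}+f'_c\,G_{\theta\lambda}+\tfrac12 (f'_c)^2 G_{\lambda\lambda}=\frac{\xi_c}{2\sqrt{\lambda_c}}\,g''_c,
	\qquad\text{hence}\qquad
	a=\frac{\alpha_2\gamma\mu\,\xi_c}{2\left(\alpha_2\gamma\sqrt{\lambda_c}\,\mu f'_c-\xi_c\right)}\left(g''_c-f''_c\right).
\end{equation*}
Second, this $a$ is \emph{not} ``exactly'' the coefficient in (\ref{eqn:CenterManifoldTheorem}): it differs from it by the factor $-4\xi_c/\bigl(\alpha_2\gamma\sqrt{\lambda_c}\,\mu f'_c-\xi_c\bigr)^2$, which is nonzero (and negative, so even the signs disagree). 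This is not a flaw in your scheme so much as an intrinsic ambiguity: the quadratic coefficient of a reduced equation scales with the normalization of the center eigenvector, so only its vanishing --- equivalently $f''_c=g''_c$ --- is coordinate-independent; the paper's formula corresponds to its particular (and differently scaled) choice of $\mathbf{p}$. Since your concluding argument uses only $a\neq 0$ (semistability of $\dot\psi=a\psi^2+\cdots$, hence instability, transferred to the full system by the reduction principle), the stated conclusions survive intact; but you should replace the claim of exact agreement by agreement up to a nonzero constant factor, or adopt the paper's eigenvector normalization if you want the literal expression.
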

\begin{proof}
First by (\ref{eqn:JacobianEigenvalues}), if $f'_c=g'_c$ the determinant of the Jacobian matrix vanishes one of the eigenvalues is equal to zero. Then one eigenvalue vanishes while the other is equal to $\mu\alpha_2\gamma f'_c-\frac{\xi_c}{\sqrt{\lambda_c}}$. Now, if $\mu>\mu_0$ the latter is positive and hence, the critical point is unstable. 

Assume now that $\mu<\mu_0$. Then, one of the eigenvalues is positive while the other is zero. At this point we cannot infer about its stability and we will use the Central Manifold Theorem (see \cite{Kuz13}). It simply states that on the phase plane there exist a graph of a function to which all sufficiently close trajectories converge exponentially fast. On this manifold, however, the dynamics is slower and governed by a smaller number of differential equations (in our case - one).

To find the center manifold first we have to diagonalize the system (\ref{eqn:MainSystem}) in order to find the direction in which the vector field has a zero eigenvalue. A simple calculation gives the eigenvectors
\begin{equation}
	\textbf{p} = \left(
	\begin{array}{c}
	-\dfrac{\xi_c }{\sqrt{\lambda_c }} \vspace{4pt} \\
	-\dfrac{4 \lambda ^{3/2} \xi_c '}{\xi_c }
	\end{array}
	\right), \quad
	\textbf{q} = \left(
	\begin{array}{c}
	\gamma  \mu  \alpha _1 f_c' \vspace{4 pt} \\
	\dfrac{4 \lambda_c ^{3/2} \xi_c '}{\xi_c }
	\end{array}
	\right),
\end{equation}
and the transition matrix
\begin{equation}
	S = \left(\begin{array}{cc}
		\textbf{p} & \textbf{q}
	\end{array}\right).
\end{equation}
If introduce the following variables
\begin{equation}
		\left(
	\begin{array}{c}
	\psi \\
	\kappa
	\end{array}\right) = S^{-1} 
	\left(
		\begin{array}{c}
		\theta-\theta_c \\
		\lambda-\lambda_c
		\end{array}
	\right),
\end{equation}
the system (\ref{eqn:MainSystem}) becomes
\begin{equation}
\begin{split}
	\frac{d}{dt}
	\left(
	\begin{array}{c}
	\psi \\
	\kappa
	\end{array}\right) &= -\frac{\xi_c }{4 \lambda_c  \xi_c  \xi_c '-4 \alpha _2 \gamma  \lambda ^{3/2} \mu  f' \xi '} 
	\left(
	\begin{array}{c}
		\mu  \left(\frac{4 \lambda_c ^{3/2} \xi_c '}{\xi_c }F(\theta,\lambda)-\alpha _1 \gamma  f'_c G(\theta,\lambda)\right) \\
		\frac{4 \lambda_c ^{3/2} \mu  \xi_c '}{\xi_c }F(\theta,\lambda)-\frac{\xi_c }{\sqrt{\lambda_c }}G(\theta,\lambda) 
	\end{array}
	\right) =:
	\left(
	\begin{array}{c}
	\widetilde{F}(\psi,\kappa) \\
	\widetilde{G}(\psi,\kappa)
	\end{array}\right),
	\\
	&\text{where } (\theta,\lambda) = \left(\theta_c+\alpha _1 \gamma    \mu  f'_c\kappa-\frac{\xi_c }{\sqrt{\lambda_c }}\psi, \lambda_c + \frac{4 \lambda_c ^{3/2}  \xi_c '}{\xi_c }(\kappa -\psi )\right).
\end{split}
\label{eqn:MainSystemTransformed}
\end{equation}
From the Central Manifold Theorem there exist a function $K=K(\psi,\mu)$ which satisfies the following partial differential equation
\begin{equation}
	\widetilde{G}(\psi,K(\psi,\mu),\mu) = K_\psi(\psi,\mu) \widetilde{F}(\psi,K(\psi,\mu),\mu), \quad K(0,0) = K_\psi(0,0) = 0.
\label{eqn:CenterManifoldPDE}
\end{equation}
Then, the dynamics of (\ref{eqn:MainSystemTransformed}) is captured by the equation
\begin{equation}
	\frac{d\psi}{d\tau} = \widetilde{F}(\psi,K(\psi)). 
\label{eqn:CenterManifoldODE}
\end{equation}
Since we are dealing with the local behaviour near critical point we can expand the center manifold into Taylor series (this is the only meaningful coefficient)
\begin{equation}
	K(\psi) = c_2 \psi^2 + \text{ higher order terms}, 
	%c_{21} \psi^2 \mu + c_{30} \psi^3 ,
\end{equation}
and plug it to (\ref{eqn:CenterManifoldPDE}) to arrive at 
\begin{equation}
	c_2 = \frac{\alpha _2 \gamma  \sqrt{\lambda_c } \mu  \xi_c ' \left(4 \lambda_c^2 \xi_c ''-\xi_c^2 f_c''\right)+\xi_c ^2 \xi_c '' \left(\xi_c -\alpha _1 \gamma  \sqrt{\lambda_c } \mu  f'_c\right)-8 \lambda_c  \xi_c ^2 \left(\xi_c '\right)^2}{2 \sqrt{\lambda_c } \xi_c ' \left(\xi_c -\alpha _1 \gamma  \sqrt{\lambda_c } \mu  f'_c\right)^2}.
\end{equation}
If we now plug the above expansion of the center manifold into (\ref{eqn:CenterManifoldODE}) we will arrive at
\begin{equation}
	\frac{d\psi}{d\tau} = \frac{\alpha _2 \gamma  \mu  \left(\xi_c ^2 f'_c-4 \lambda_c ^2 \xi_c '\right)}{\xi_c  \left(\xi_c -\alpha _2 \gamma  \sqrt{\lambda_c } \mu  f'_c\right)} \psi + \text{ higher order terms}.
\end{equation}
However, by using (\ref{eqn:Nullclines}) and (\ref{eqn:NullclinesDerivatives}) we have
\begin{equation}
	\xi_c ^2 f_c'-4 \lambda ^2 \xi_c ' = \alpha_2 \gamma \mu \xi_c^2 \left(f'_c-\frac{\xi_c'}{(1+\xi_c)^2}\right) = \alpha_2 \gamma \mu \xi_c^2 \left(f'_c-g'_c\right) = 0,
\end{equation}
by our assumption. Therefore the leading order term in the above differential equation vanishes and we have to take into account higher orders of $\psi$. Using the above calculation and computing the next term in the Taylor series we obtain
\begin{equation}
	\psi' = \frac{2 \alpha _2 \gamma  \mu \left(\xi_c ^3 f_c''+32 \lambda_c ^3 \left(\xi_c '\right)^2-4 \lambda_c ^2 \xi_c  \xi_c ''\right)}{ \xi_c \left(\alpha _2 \gamma  \sqrt{\lambda_c } \mu  f'_c-\xi_c \right)^3}\psi^2 + \text{ higher order terms}.
\end{equation}
This can be further simplified by again using (\ref{eqn:Nullclines}) and noticing that
\begin{equation}
	4 \frac{\lambda_c ^2}{\xi_c^2}  \xi_c '' -32 \frac{\lambda_c ^3}{\xi_c^3} \left(\xi_c '\right)^2 = \frac{1}{4}\frac{\xi''_c}{(1+\xi_c)^2} - \frac{1}{2} \frac{\left(\xi'_c\right)^2}{(1+\xi_c)^2} = g''_c.
\end{equation}
From that there follows the assertion (\ref{eqn:CenterManifoldTheorem}). 
\end{proof}

\section{Conclusion and future work}
We have shown that even a very simple conceptual model describing ice-albedo and precipitation-temperature feedbacks can exhibit internal self-sustained oscillations. The model is by no means constructed to represent the actual state of the climate but rather to describe a simple mechanism which takes into account two important features of Earth system. 

Our results rigorously state that under some natural conditions, the temperature and the ice sheet extent oscillate at a period of $O(10^4 \text{ years})$. The amplitude is, however, smaller than in reality but this behaviour can be anticipated from the very construction. Moreover, the results are completely independent from the particular choice of the functional form of the oceanic albedo and accumulation to ablation ratio (which are hard to determine experimentally). This generality strengthens the fact that the climate may indeed oscillate without any external forcing. 

Our future work will consists of considering the case of nonzero $\epsilon$ which would be a function of time and temperature in order to take into account the temperature-melting feedback of the ice sheets. The ultimate goal would be to augment the model with equations representing lithosphere response and CO$_2$ oscillations.

\section*{Appendix}
The calculation of the first Lypanunov coefficient is a very technical point of the above considerations. We have to use (\ref{eqn:LyapunovFormula}) and in order to do that, calculate the necessary second and third derivatives. For the function $F$ we have
\begin{equation}
\begin{split}
	\frac{\partial^2 F}{\partial \theta^2} &= \alpha_2\gamma f''_c, \quad \frac{\partial^2 F}{\partial \lambda\partial\theta} = \frac{\partial^2 F}{\partial \lambda^2} = 0, \\
	\frac{\partial^3 F}{\partial \theta^3} &= \alpha_2\gamma f'''_c, \quad \frac{\partial^3 F}{\partial \theta^2 \partial\lambda} = \frac{\partial^3 F}{\partial \theta \partial\lambda^2} = \frac{\partial^3 F}{\partial \lambda^3} = 0,
\end{split}
\end{equation}
while the derivatives of $G$ can be written as
\begin{equation}
\begin{split}
	\frac{\partial^2 G}{\partial \theta^2} &= \frac{4 \lambda_c^{3/2}}{\xi_c} \xi_c'', \quad \frac{\partial^2 G}{\partial \lambda\partial\theta} = \frac{2 \sqrt{\lambda_c} (1-2 \xi_c )}{\xi_c }\xi_c', \quad \frac{\partial^2 G}{\partial \lambda^2} = -\frac{\xi_c}{\lambda_c^{3/2}}, \\
	\frac{\partial^3 G}{\partial \theta^3} &= \frac{4 \lambda_c ^{3/2} }{\xi_c}\xi_c''' \quad \frac{\partial^3 G}{\partial \theta^2 \partial\lambda} = \frac{2 \sqrt{\lambda } (1-2 \xi_c)}{\xi_c} \xi_c'', \quad \frac{\partial^3 G}{\partial \theta \partial\lambda^2} = -\frac{(4 \xi_c +1)}{\sqrt{\lambda_c} \xi_c }\xi_c', \quad \frac{\partial^3 G}{\partial \lambda^3} = \frac{3 \xi_c }{4 \lambda_c ^{5/2}}.
\end{split}
\end{equation}
Having those we can compute the derivatives of $P$ and $Q$ in (\ref{eqn:LyapunovFormula}). In order to facilitate this tedious process we have used the Mathematica scientific environment. The result is the following
\begin{equation}
\begin{split}
	l_1(\mu_0) &= \frac{4 f_c^{(3)} \lambda_c ^2 \xi_c ^2+\left(f'_c\right)^2 \left(3 \xi_c ^2 g'_c-8 \lambda_c ^2 (4 \xi_c +1) \xi_c '\right)+8 \lambda_c ^3 (1-2 \xi_c ) f' \xi_c ''}{32 \lambda_c ^2 \xi_c ^2 \left(f'\right)^2 g'_c \sqrt{\frac{g'_c}{f'_c}-1}} \\
	&+ \frac{\lambda ^2 \xi_c ^2 f_c'' \left(4 \lambda_c ^2 \xi_c ''-\xi_c ^2 f_c''\right)+\xi_c ^2 \left(f_c'\right)^3 \left(\xi_c ^2 g'+4 \lambda_c ^2 (2 \xi_c -1) \xi_c '\right)}{8 \lambda_c ^2 \xi_c ^4 \left(f_c'\right)^2 g_c' \left(f_c'-g_c'\right) \sqrt{\frac{g_c'}{f_c'}-1}} \\
	&+ \frac{2 \lambda_c ^2 \left(f_c'\right)^2 \left((2 \xi_c -1) \xi_c ' \left(\xi_c ^2 g_c'+4 \lambda_c ^2 (2 \xi_c -1) \xi_c '\right)-2 \lambda_c  \xi_c ^2 \xi_c ''\right)-2 \lambda_c ^3 (2 \xi_c -1) f_c' \xi_c ' \left(\xi_c ^2 f_c''+4 \lambda_c ^2 \xi_c ''\right)}{8 \lambda_c ^2 \xi_c ^4 \left(f_c'\right)^2 g_c' \left(f_c'-g_c'\right) \sqrt{\frac{g_c'}{f_c'}-1}}.
\end{split}
\label{eqn:LyapunovCoefficient}
\end{equation}
The sign of the above can now be numerically evaluated to infer the stability of a limit cycle. 

%\bibliography{oscillations}
%\bibliographystyle{plain}

\end{document}